\newtheorem{theorem}{Theorem}[section]
\newtheorem{lemma}[theorem]{Lemma}
\newtheorem{proposition}[theorem]{Proposition}
\newtheorem{fact}[theorem]{Fact}
\theoremstyle{definition}
\newtheorem{example}[theorem]{Example}
\newtheorem{remark}[theorem]{Remark}
\newcommand{\bx}{\mbox{\boldmath $x$}}
\newcommand{\be}{\mbox{\boldmath $e$}}
\newcommand{\e}{\mbox{\boldmath $e$}}
\newcommand{\bb}{\mbox{\boldmath $b$}}
\newcommand{\bnu}{\mbox{\boldmath $\nu$}}
\newcommand{\bs}{\mbox{\boldmath $s$}}
\newcommand{\bgamma}{\mbox{\boldmath $\gamma$}}
\newcommand{\bc}{\mbox{\boldmath $c$}}
\newcommand{\A}{{\cal A}}
\renewcommand{\phi}{\varphi}
\newcommand{\hess}{\operatorname{Hess}}
\newcommand{\inner}[2]{\left\langle{#1},{#2}\right\rangle}
\newcommand{\spann}[1]{\left\langle{#1}\right\rangle}
\newcommand{\ep}{\varepsilon}
\newcommand{\R}{{\mathbb R}}
\newcommand{\sgn}{{\operatorname{sgn}}}
\newcommand{\lon}{\longrightarrow}
\newcommand{\pmt}[1]{{\begin{pmatrix} #1  \end{pmatrix}}}
\title{\Large {\bf Flat surfaces along swallowtails}}
\author{Shyuichi IZUMIYA, Kentaro SAJI and Keisuke TERAMOTO}
\date{\today}
\begin{document}

\maketitle
\begin{abstract}
We consider developable surfaces along the singular set of a swallowtail
which are considered to be flat approximations of the swallowtail.
For the study of singularities of such developable surfaces, 
we introduce the notion of  Darboux frames along swallowtails and invariants.
As a by-product, we give a new example of a frontal which is locally homeomorphic to a swallowtail. 
\end{abstract}
\renewcommand{\thefootnote}{\fnsymbol{footnote}}
\footnote[0]{2010 Mathematics Subject classification. Primary 57R45;
Secondary 58Kxx} 
\footnote[0]{Key Words and Phrases. swallowtails,
flat approximations, curves on surfaces, Darboux frame, 
developable surfaces, contour edges} 
\footnote[0]{This work was supported by 
JSPS KAKENHI Grant Numbers 
JP26287009,JP26400087,JP17J02151.} 
\section{Introduction}
Recently, there appeared several articles concerning on 
differential geometry of singular surfaces in the 
Euclidean $3$-space \cite{FH1,FH2,MN,MS,MSUY,OT,SUY,suy3,WE}.
Wave fronts and frontals are particularly interesting singular surfaces 
which always have normal directions even along singularities.
Surfaces which have only
cuspidal edges and swallowtails as singularities are 
the generic wave fronts in the Euclidean $3$-space.
In this paper we consider a developable surfaces
along the singular locus of 
a swallowtail surface in the Euclidean $3$-space,
and a singular point of a frontal surface which has the
similar properties to a swallowtail.
Such a developable surface is called a {\it developable surface along\/} 
swallowtail, (or a singular point of a frontal surface
which have a
similar properties to a swallowtail).
Actually there are infinitely many developable surfaces along the singular locus of the swallowtail.
Since a frontal surface has the normal direction at any point 
(even at a singular point),
we focus on typical two developable surfaces along it.
One of them is a developable surface which is tangent to the swallowtail 
surface and another one is normal to it. 
These two developable surfaces are considered to be flat 
approximations of the swallowtail along the singular locus of it.
We investigate the singularities of these developable surfaces
and induce new invariants for the swallowtail.
For the purpose, we introduce the notion of 
Darboux frames along swallowtails which is analogous to the notion of 
Darboux frames along curves on regular surfaces (cf. \cite{HII,HI,IzuOta}).
Since the Darboux frame along a swallowtail is orthonormal frame, 
we can obtain the structure equation
and the invariants (cf. equation \eqref{eq:frenet}). 
We show that these invariants are related to the invariants 
which are known as basic invariants of the swallowtail 
in \cite{MS,MSUY,SUY}. 
By using the Darboux frame, we can directly and 
instinctively understand geometric properties of the swallowtail.
Moreover, if one of the three basic invariants is constantly equal to zero, 
we have special developable surfaces.

The similar investigation for cuspidal edges has been done in \cite{ist}.
This paper is not only a kind of continuous investigation of \cite{ist} but also
gives a new example of a frontal which is locally homeomorphic to a swallowtail as a by-product (cf. Example \ref{ex:ndfcyl}).
We only know a cupsidal crosscap as such an example so far as we know.

%This paper is a kind of continuation of \cite{ist}.
\section{Preliminaries}
\subsection{Preliminaries on frontals}
The precise definition of the swallowtail (surface) is given as follows:
The unit cotangent bundle $T^*_1\R^{3}$ of  $\R^{3}$ has the
canonical contact structure and can be identified with the unit
tangent bundle $T_1\R^{3}$. Let $\alpha$ denote the canonical
contact form on it. A map $i:M\to T_1\R^{3}$ is said to be {\it
isotropic\/} if the pull-back $i^*\alpha$ vanishes
identically. We call the image of $\pi\circ i$
the {\it wave front set\/} of $i$,
where $\pi:T_1\R^{3}\to\R^{3}$ is the canonical projection and we
denote it by\/ $W(i)$. Moreover, $i$ is called the {\it Legendrian
lift\/} of $W(i)$. 
With this framework, we define the notion of
fronts as follows: A map-germ $f:(\R^2,0) \to (\R^{3},0)$ is
called a {\it frontal\/} 
if there exists a unit vector field 
(called {\it unit normal of\/} $f$)
$\nu$ of $\R^{3}$ along $f$
such that
$L=(f,\nu):(\R^2,0)\to (T_1\R^{3},0)$ is
an isotropic map by an identification 
$T_1\R^3 = \R^3 \times S^2$, where $S^2$ is 
the unit sphere in $\R^3$ (cf. \cite{AGV}, see also \cite{krsuy}).
A frontal $f$ is a {\it front\/} if the above $L$ can be taken as
an immersion.
A point $q\in (\R^2, 0)$ is a singular point if $f$ is not an
immersion at $q$.
A map $f:M\to N$ 
between $2$-dimensional manifold $M$ and
$3$-dimensional manifold $N$ is called 
a frontal (respectively, a front)
if for any $p\in M$, the map-germ $f$ at $p$
is a frontal (respectively, a front).
A singular point $p$ of a map $f$ is called a {\it cuspidal edge\/}
if the map-germ $f$ at $p$ is $\mathcal{A}$-equivalent to
$(u,v)\mapsto(u,v^2,v^3)$ at $0$,
and a singular point $p$ is called a {\it swallowtail\/}
if the map-germ $f$ at $p$ is $\mathcal{A}$-equivalent to
$(u,v)\mapsto(u,4v^3+2uv,3v^4+uv^2)$ at $0$. 
(Two map-germs
$f_1,f_2:(\R^n,0)\to(\R^m,0)$ are $\mathcal{A}$-{\it
equivalent}\/ if there exist diffeomorphisms
$S:(\R^n,0)\to(\R^n,0)$ and $T:(\R^m,0)\to(\R^m,0)$ such
that $ f_2\circ S=T\circ f_1 $.) Therefore if the singular point $p$
of $f$ is a swallowtail, then $f$ at $p$ is a front.
Furthermore, cuspidal edges and swallowtails are 
two types of generic singularities of
fronts.
Let $f:(\R^2,0) \to (\R^{3},0)$ be a frontal and $\nu$ its
unit normal.
Let $\lambda$ be a function which is a non-zero functional multiplication
of the function
$$
\det(f_u,f_v,\nu)
$$
for some coordinate system $(u,v)$, and 
$(~)_u=\partial/\partial u$,
$(~)_v=\partial/\partial v$.
A singular point $p$ of $f$ is called {\it non-degenerate\/}
if $d\lambda(p)\ne0$.
Let $0$ be a non-degenerate singular point of $f$.
Then the set of singular points $S(f)$ is a regular curve,
we take a parameterization $\gamma(t)$ $(\gamma(0)=0)$ of it.
We set $\hat\bgamma=f\circ\gamma$ and call $\hat\bgamma$ 
the {\it singular locus}.
One can show that
there exists a vector field $\eta$ along $\gamma$,
such that 
$$
\ker df_{\gamma(t)}=\spann{\eta(t)}_{\R}.
$$
Set 
\begin{equation}\label{eq:criphi}
\phi(t)=\det(\gamma'(t),\eta(t)).
\end{equation}
Here, %and in what follows,
we denote 
%${}'=\partial /\partial u$ or
${}'=d /d t$.
A non-degenerate singular point $0$ is 
the {\it first kind\/} if
$\phi(0)\ne0$.
A non-degenerate singular point $0$ is 
the {\it second kind\/} if
$\phi(0)=0$ and $\phi'(0)\ne0$.
We remark that if $f$ is a front,
then the singular point of the first kind
is the cuspidal edge, and
the singular point of the second kind
is the swallowtail \cite{krsuy}.
The following criteria for cuspidal edge and swallowtail are
known.
\begin{fact}\label{fact:cri}
Let\/ $f:(\R^2,0)\to(\R^3,0)$ be a front,
and\/ $0$ a non-degenerate singularity.
Then the followings are equivalent:
\begin{itemize}
\item $0$ is cuspidal edge\/ $($respectively, swallowtail\/$)$,
\item $\phi(0)\ne0$ $($respectively, $\phi(0)=0$, $\phi'(0)\ne0)$,
\item $\eta\lambda(0,0)\ne0$ $($respectively, 
$\eta\lambda(0,0)=0$, $\eta\eta\lambda(0,0)\ne0)$.
\end{itemize}
\end{fact}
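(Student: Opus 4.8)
The plan is to recall that the statement is essentially a reformulation of the Kokubu–Rossman–Saji–Umehara–Yamada criterion, and to organize the argument around two elementary observations linking the three conditions. First I would fix a coordinate system $(u,v)$ so that the singular set $S(f)$ is cut out as a regular curve; since $0$ is non-degenerate, $d\lambda(0)\ne 0$, so after an admissible change of coordinates I may assume $\lambda = v$ and $S(f) = \{v = 0\}$, parametrized by $\gamma(t) = (t,0)$, hence $\gamma'(t) = (1,0)$. The null vector field $\eta$ along $\gamma$ can be written $\eta(t) = a(t)\,\partial_u + b(t)\,\partial_v$ with $(a(t),b(t)) \ne (0,0)$. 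The key computation is then that $\phi(t) = \det(\gamma'(t),\eta(t)) = b(t)$, while on the other hand $\eta\lambda = a\,\lambda_u + b\,\lambda_v = b$ along $\gamma$ as well (because $\lambda_u = 0$ and $\lambda_v = 1$ on $S(f)$ in these coordinates). Thus $\phi(t) = \eta\lambda(\gamma(t))$ identically along the singular curve, which immediately gives the equivalence of the second and third bulleted conditions, including the derivative conditions: differentiating $\phi(t) = (\eta\lambda)\circ\gamma(t)$ in $t$ at $t=0$ shows $\phi'(0) = \eta\eta\lambda(0,0)$ once $\phi(0) = \eta\lambda(0,0) = 0$, since the extra term involves $\gamma'$ hitting the $v$-direction, which is killed by $\lambda_v$ being evaluated where it equals the constant and $\lambda_u \equiv 0$ on $S(f)$; more carefully, $\phi'(0) = (\gamma^*(\eta\lambda))'(0)$ and, using $\lambda\circ\gamma \equiv 0$, the chain rule collapses the unwanted term.

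Next I would invoke the identification between the two geometric conditions and the normal-form statements. The first-kind / second-kind dichotomy ($\phi(0)\ne 0$ versus $\phi(0)=0,\ \phi'(0)\ne 0$) is, by the remark immediately preceding the statement (quoting \cite{krsuy}), precisely the condition that $f$ is $\mathcal{A}$-equivalent at $0$ to the cuspidal edge, respectively the swallowtail; this is where the heavy lifting — the actual recognition theorem producing the diffeomorphisms $S$ and $T$ — is imported wholesale rather than reproved. So the structure of the write-up is: (i) the coordinate normalization, (ii) the identity $\phi = \eta\lambda$ along $S(f)$ and its differentiated consequence, giving second $\Leftrightarrow$ third, and (iii) citing \cite{krsuy} for first $\Leftrightarrow$ second.

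One subtlety I would be careful about is the well-definedness of the conditions $\eta\lambda(0,0)$ and $\eta\eta\lambda(0,0)$: $\eta$ is only defined along $\gamma$, and $\lambda$ is only defined up to multiplication by a non-vanishing function, so $\eta\eta\lambda$ needs $\eta$ extended to a neighborhood or needs the computation done intrinsically along $S(f)$. I would address this by noting that on the singular set $\lambda$ vanishes, so $\eta\lambda|_{S(f)}$ is independent of the extension of $\eta$ and of the non-vanishing multiplier (the multiplier contributes a term proportional to $\lambda = 0$), and similarly for the second application once $\eta\lambda$ itself vanishes at $0$; this is exactly the point where the identity $\phi = \eta\lambda$ on $S(f)$ pays off, since $\phi$ is manifestly well-defined.

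The main obstacle is therefore not any single hard estimate but rather the bookkeeping around these well-definedness issues together with the fact that the genuinely difficult content — that the first/second kind condition characterizes the cuspidal edge/swallowtail up to $\mathcal{A}$-equivalence — is not something one proves here but quotes from \cite{krsuy}; the new work in the write-up is only the clean verification that the three listed conditions are mutually equivalent, which is the short computation sketched above.
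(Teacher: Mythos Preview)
The paper does not prove Fact~2.1 at all; it records it as a known result, with the reference \cite{krsuy} appearing in the sentence immediately preceding the statement. So there is no proof in the paper to compare against, and your plan --- prove the equivalence of the second and third bullets by a direct coordinate computation, then cite \cite{krsuy} for the equivalence with the first --- already does more than the paper does.

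Your argument is essentially correct, but one step is imprecise and the justification you give for it is wrong. In your normalized coordinates ($\lambda=v$, $\gamma(t)=(t,0)$, $\eta=a(t)\partial_u+b(t)\partial_v$) you correctly get $\phi(t)=b(t)=(\eta\lambda)|_{\gamma(t)}$, which yields the zeroth-order equivalence. However, your claim that differentiating gives $\phi'(0)=\eta\eta\lambda(0,0)$ is not literally true. Differentiating $\phi(t)=b(t)$ gives $\phi'(0)=b'(0)$, whereas for any extension of $\eta$ one computes $\eta\eta\lambda(0,0)=a(0)\,b'(0)+b(0)\cdot(\text{something})=a(0)\,b'(0)$ (using $\lambda=v$ and $b(0)=0$). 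There is no ``unwanted term that the chain rule collapses''; rather there is a nonzero multiplicative factor $a(0)$, which is nonzero precisely because at a second-kind point $\eta(0)$ is parallel to $\gamma'(0)$ and $\eta(0)\ne0$. So the equivalence $\phi'(0)\ne0\Leftrightarrow\eta\eta\lambda(0,0)\ne0$ does hold, but for the reason just given, not the one you wrote. With that correction your write-up goes through.
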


On the other hand, a developable surface is known to be a frontal, so that
the normal direction is well-defined at any point.
We say that a developable surface is an 
{\it osculating developable surface\/} along
$f$ if it contains the singular set of $f$ such that
the normal direction of the developable surface coincides 
with the normal direction of $f$ at any point of
the singular set.
We also say that a developable surface is a 
{\it normal developable surface\/} along $f$ if
it contains the singular set of $f$ such that 
the normal direction of the developable surface
belongs to the tangent plane of $f$ at any point of the singular set,
where the {\it tangent plane\/} of $f$ at $\gamma(t)$ is 
$\nu(\gamma(t))^\perp$.
In this paper, we study the geometric properties of a non-degenerate
singular point of a frontal $f$
using these two developable surfaces along $f$.
In particular, we show that the singular values of those 
developable surfaces characterize some geometric properties of $f$.

\subsection{Frames on non-degenerate singularities of frontals}
\label{sec:frame}
First, we show the following lemma to take a frame along a singular curve.
\begin{lemma}\label{lem:23type}
If\/ $0$ is a singular point of the second kind
of\/ $f$.
Then\/ 
$\hat\bgamma=f\circ \gamma$ satisfies\/ 
$\det(\hat\bgamma''(0),\hat\bgamma'''(0),\nu(0))\ne0$,
where\/ $\gamma$ is a parameterization of\/ $S(f)$ near\/ $0$.
In particular\/ $\hat\bgamma$ is the\/ $3/2$-cusp at\/ $0$.
\end{lemma}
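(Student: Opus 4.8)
The plan is to compute in coordinates adapted to the singular curve. Since $0$ is non-degenerate, $S(f)$ is a regular curve through $0$, so I take a coordinate system $(u,v)$ centred at $0$ with $S(f)=\{v=0\}$ and the parametrisation $\gamma(t)=(t,0)$, so that $\gamma'(t)=\partial_u$. Writing the kernel field along $\gamma$ as $\eta(t)=a(t)\partial_u+c(t)\partial_v$, we have $\phi(t)=\det(\gamma'(t),\eta(t))=c(t)$, so the second-kind hypothesis says $c(0)=0$, $c'(0)\ne0$, which forces $a(0)\ne0$ because $\eta(0)\ne0$. The conditions $\phi(0)=0$, $\phi'(0)\ne0$ are unchanged if $\eta$ is multiplied by a nowhere-zero function along $\gamma$, so I may replace $\eta$ by $\eta/a$ and assume $\eta(t)=\partial_u+b(t)\partial_v$ with $b(0)=0$ and $b'(0)=\phi'(0)\ne0$. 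Since $\hat\bgamma(t)=f(t,0)$ we have $\hat\bgamma'(t)=f_u(t,0)$, $\hat\bgamma''(t)=f_{uu}(t,0)$, $\hat\bgamma'''(t)=f_{uuu}(t,0)$; and as $\partial_u=\eta(0)$ spans $\ker df_0$, we get $\hat\bgamma'(0)=f_u(0,0)=0$, so $0$ is a singular point of $\hat\bgamma$.

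The key relation is $df_{\gamma(t)}(\eta(t))=0$, i.e. $f_u(t,0)=-b(t)f_v(t,0)$. Differentiating in $t$ and using $b(0)=0$ gives $\hat\bgamma''(0)=f_{uu}(0,0)=-b'(0)f_v(0,0)$, and differentiating once more gives $\hat\bgamma'''(0)=f_{uuu}(0,0)=-b''(0)f_v(0,0)-2b'(0)f_{uv}(0,0)$. Substituting these into the triple product and expanding by multilinearity, the terms with two copies of $f_v$ vanish and one is left with
\[
\det\bigl(\hat\bgamma''(0),\hat\bgamma'''(0),\nu(0)\bigr)=2\,b'(0)^2\,\det\bigl(f_v,f_{uv},\nu\bigr)(0,0).
\]

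It remains to see $\det(f_v,f_{uv},\nu)(0,0)\ne0$, and this is where non-degeneracy enters. Taking $\lambda=\det(f_u,f_v,\nu)$ as identifier of singularities in these coordinates, $\lambda$ vanishes identically on $S(f)=\{v=0\}$, so $\lambda_u(0,0)=0$, and hence $d\lambda(0)\ne0$ forces $\lambda_v(0,0)\ne0$. Differentiating $\lambda$ in $v$ and evaluating at $0$, every term containing $f_u(0,0)=0$ drops out, leaving $\lambda_v(0,0)=\det(f_{uv},f_v,\nu)(0,0)=-\det(f_v,f_{uv},\nu)(0,0)$. Combining,
\[
\det\bigl(\hat\bgamma''(0),\hat\bgamma'''(0),\nu(0)\bigr)=-2\,b'(0)^2\,\lambda_v(0,0)\ne0 ,
\]
because $b'(0)=\phi'(0)\ne0$. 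In particular $\hat\bgamma''(0)$ and $\hat\bgamma'''(0)$ are linearly independent (a pair of parallel vectors together with $\nu(0)$ would give a vanishing determinant), so with $\hat\bgamma'(0)=0$ the standard criterion for ordinary cusps of space curves shows that $\hat\bgamma$ has a $3/2$-cusp at $0$.

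The calculation itself is routine; the two points requiring care are that the normalisation $\eta=\partial_u+b(t)\partial_v$ does not disturb the hypotheses $\phi(0)=0$, $\phi'(0)\ne0$, and the identity $\det(f_v,f_{uv},\nu)(0,0)=-\lambda_v(0,0)$, which is exactly what converts non-degeneracy of the singular point into the transversality of $\hat\bgamma''(0)$ and $\hat\bgamma'''(0)$. Conceptually, $\phi'(0)\ne0$ forces $\hat\bgamma''(0)\ne0$ while non-degeneracy supplies an independent third-order term; beyond this bookkeeping I expect no real obstacle.
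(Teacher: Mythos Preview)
Your proof is correct. The setup is the same as the paper's --- coordinates with $S(f)=\{v=0\}$, null field $\partial_u+\varepsilon(u)\partial_v$, and the relations $f_{uu}(0,0)=-\varepsilon'(0)f_v(0,0)$, $f_{uuu}(0,0)=-\varepsilon''(0)f_v(0,0)-2\varepsilon'(0)f_{uv}(0,0)$ --- but the final step differs. The paper invokes the third form of the second-kind criterion, $\eta\eta\lambda(0)\ne0$, and computes $\eta\eta\lambda(0)=-\tfrac{3}{2\varepsilon'(0)}\det(f_{uu},f_{uuu},\nu)(0)$ by expanding the iterated directional derivative. You instead stay with the definition $\phi'(0)\ne0$ and reduce directly to $\det(\hat{\bm\gamma}'',\hat{\bm\gamma}''',\nu)(0)=-2\,\phi'(0)^2\,\lambda_v(0,0)$, reading off the non-vanishing from $d\lambda(0)\ne0$ and $\lambda_u(0,0)=0$. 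Your route is a bit more economical: it avoids the multi-term expansion of $\eta\eta\lambda$ and does not appeal to the (unproved in the paper) equivalence between the $\phi$-criterion and the $\eta\eta\lambda$-criterion. The paper's route, on the other hand, makes explicit the constant $-3/(2\varepsilon')$ linking $\eta\eta\lambda$ to the determinant, which is occasionally useful elsewhere.
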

Here, a {\it $3/2$-cusp\/} is a map-germ $(\R,0)\to(\R^3,0)$ 
which is $\A$-equivalent
to $t\mapsto(t^2,t^3,0)$ at $0$.
Taking a coordinate system $(u,v)$ satisfying
$S(f)=\{(u,v)\,|\,v=0\}$.
Then one can take a null vector field 
$\eta(u)=\partial_u+\ep(u)\partial_v$, $\ep(0)=0$, $\ep'(0)\ne0$. 
If we take such a coordinate system $(u,v)$, 
we set ${}'=\partial/\partial u$ or ${}'=d/du$ in what follows. 
Since our consideration in this paper is local,
we assume that 
\begin{equation}\label{eq:ep'}
\ep'(0)>0
\end{equation}
by changing the coordinate $(u,v)$ to $(u,-v)$ if necessary. 
Then $df(\eta)=0$ on the $u$-axis, so that 
there exists a vector valued function $g$
such that
\begin{equation}\label{eq:dfeta}
f_u(u,v)+\ep(u)f_v(u,v)=vg(u,v).
\end{equation}
Then differentiating \eqref{eq:dfeta},
we get
\begin{equation}\label{eq:dfetabibun}
\begin{array}{l}
f_{uu}(u,v)+\ep'(u)f_v(u,v)+\ep(u)f_{uv}(u,v)=vg_u(u,v),\\
f_{uv}(u,v)+\ep(u)f_{vv}(u,v)=g(u,v)+vg_u(u,v),\\
f_{uuu}(u,v)+\ep''(u)f_v(u,v)+2\ep' (u)f_{uv}(u,v)
+\ep(u) f_{uuv}(u,v)=vg_{uu}(u,v).
\end{array}
\end{equation}
On the $u$-axis, it holds that
\begin{equation}\label{eq:dfetabibunu}
\begin{array}{l}
f_{uu}(u,0)+\ep'(u)f_v(u,0)+\ep(u)f_{uv}(u,0)=0,\\
f_{uv}(u,0)+\ep(u)f_{vv}(u,0)=g(u,0),\\
f_{uuu}(u,0)+\ep''(u)f_v(u,0)+2\ep' (u)f_{uv}(u,0)
+\ep(u) f_{uuv}(u,0)=0.
\end{array}
\end{equation}
\begin{proof}[Proof of Lemma\/ {\rm \ref{lem:23type}}]
Since the assertion does not depend on the choice of
coordinate systems,
we take a coordinate system $(u,v)$ and 
take a null vector field as above.
Then 
$
\det(\hat\bgamma''(0),\hat\bgamma'''(0),\nu(0))
=
\det(f_{uu},f_{uuu},\nu)(0,0)
$.
On the other hand,
\begin{align*}
\eta\eta\lambda(0)=&
\det(\eta\eta f_u,f_v,\nu)(0)+
2\det(\eta f_u,\eta f_v,\nu)(0)+
2\det(\eta f_u,f_v,\eta\nu)(0)\\
=&
\det(f_{uuu}+\ep'f_{uv},f_v,\nu)(0)+
2\det(f_{uu},f_{uv},\nu)(0)+
2\det(f_{uu},f_v,\nu_u)(0)\\
=&
\dfrac{1}{2\ep'}\det(f_{uuu},f_{uu},\nu)(0)
-\dfrac{1}{\ep'}\det(f_{uu},f_{uuu},\nu)(0)\\
=&
-\dfrac{3}{2\ep'}\det(f_{uu},f_{uuu},\nu)(0).
\end{align*}
One can easily show that $0$ is a singular point of the
second kind if and only if $d\lambda\ne0$, $\eta\lambda=0$
and $\eta\eta\lambda\ne0$ at $(0,0)$. 
Thus the assertion follows.
\end{proof}

Let $0$ be a singular point of the second kind 
of a frontal
$f:(\R^2,0)\to(\R^3,0)$.
By Lemma \ref{lem:23type},
$\det(\hat\bgamma'',\hat\bgamma''',\nu)\ne0$ holds,
we take a parameter of $\gamma$ satisfying
$\det(\hat\bgamma'',\hat\bgamma''',\nu)>0$.
Again by Lemma \ref{lem:23type},
$\hat\bgamma''(0)\ne0$,
the tangent line of $\hat{\bgamma}(u)=f\circ \gamma(u)$ at $0$ is
well-defined.
Set a unit vector field $\e(u)$ along $\gamma$ 
such that $\e(u)$ is tangent to $\hat\bgamma$ if $u\ne0$
which
satisfies
$$\e(0)=\lim_{u\to+0}\dfrac{\hat\bgamma'(u)}{|\hat\bgamma'(u)|}.$$
We set
$$\bnu(u)=\nu\circ\gamma(u)\quad\text{and}\quad
\bb(u)=-\e(u)\times \bnu(u).$$
Then
$
\{\e,\bb,\bnu\}
$
forms a positive orthonormal frame along $\gamma$.
We have the following Frenet-Serret type formula:
\begin{equation}\label{eq:frenet}
\left\{
\begin{array}{ccl}
\displaystyle
\e'(u) &=& \tilde \kappa_g(u) \bb(u)+\tilde \kappa_\nu(u) \bnu(u), 
\\
\displaystyle
\bb'(u) &=& -\tilde \kappa_g(u) \e(u)+\tilde \kappa_t(u) \bnu(u), 
\\
\displaystyle
\bnu'(u) &=& -\tilde \kappa_\nu(u) \e(u)-\tilde \kappa_t(u) \bb(u).
\end{array}
\right. 
\end{equation}
%Here, we denote ${~}'=\dfrac{d}{du}$. 
Note that the above invariants depend on the choice of parameter.
These invariants can be written by
using known invariants.
\begin{proposition}\label{prop:frenet}
The invariants\/
$\tilde \kappa_g$,
$\tilde \kappa_\nu$ and\/
$\tilde \kappa_t$ satisfy
\begin{equation}\label{eq:frenetinv}
\tilde \kappa_g=|\alpha|\kappa_s,\quad
\tilde \kappa_\nu=\alpha\kappa_\nu,\quad
\tilde \kappa_t=\alpha\kappa_t,
\end{equation}
where 
$$
\alpha(u)=\sgn(u)|\gamma'(u)|,
$$
and which is a\/ $C^\infty$ function.
Here\/ 
$\kappa_s$ is the {\it singular curvature\/} {\rm (\cite{SUY})},
$\kappa_\nu$ is the {\it limiting normal curvature\/} {\rm (\cite{SUY})} 
and
$\kappa_t$ is the {\it cuspidal torsion\/} {\rm (\cite{MS})}.
\end{proposition}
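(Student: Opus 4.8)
The plan is to reduce \eqref{eq:frenetinv} to a direct computation in the adapted coordinates of Section~\ref{sec:frame}. Fix a coordinate system $(u,v)$ with $S(f)=\{v=0\}$ and null vector field $\eta=\partial_u+\ep\partial_v$, $\ep(0)=0$, $\ep'(0)>0$, and first write the Darboux frame explicitly along the $u$-axis. By \eqref{eq:dfeta} we have $f_u(u,0)=-\ep(u)f_v(u,0)$, so in the parameter $\gamma(u)=(u,0)$ the singular locus satisfies $\hat\bgamma'(u)=-\ep(u)f_v(u,0)$ and $|\hat\bgamma'(u)|=|\ep(u)|\,|f_v(u,0)|$; since $\ep(u)/|\ep(u)|=\sgn(u)$ near $0$ by \eqref{eq:ep'}, the one-sided limit in the definition of $\e$ gives
\[
\e(u)=\sgn(u)\,\frac{\hat\bgamma'(u)}{|\hat\bgamma'(u)|}=-\frac{f_v(u,0)}{|f_v(u,0)|},
\]
which is manifestly $C^\infty$, and correspondingly $\bnu(u)=\nu(u,0)$ and $\bb(u)=-\e\times\bnu=\big(f_v(u,0)/|f_v(u,0)|\big)\times\nu(u,0)$. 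If the parameter with $\det(\hat\bgamma'',\hat\bgamma''',\nu)>0$ used in \eqref{eq:frenet} differs from $\gamma(u)=(u,0)$, the difference is a regular reparametrization, which I would carry along; it produces exactly the factor recorded by $\alpha$.

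With the frame in hand I would compute the three invariants from \eqref{eq:frenet} as $\tilde\kappa_g=\inner{\e'}{\bb}$, $\tilde\kappa_\nu=\inner{\e'}{\bnu}$ and $\tilde\kappa_t=\inner{\bb'}{\bnu}=-\inner{\bb}{\bnu'}$. Substituting the expressions above, expanding the $u$-derivatives, and using $\inner{f_v}{\nu}=0$ together with $\inner{a}{b\times c}=\det(a,b,c)$ — so that the $f_v$-terms coming from differentiating $|f_v|$ all drop out — one is left with
\[
\tilde\kappa_g=\frac{\det(f_v,f_{uv},\nu)}{|f_v|^2},\qquad
\tilde\kappa_\nu=-\frac{\inner{f_{uv}}{\nu}}{|f_v|},\qquad
\tilde\kappa_t=\frac{\det(f_v,\nu_u,\nu)}{|f_v|}
\]
on the $u$-axis (up to the reparametrization factor of the previous paragraph). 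The remaining step is to recall the coordinate expressions for the singular curvature $\kappa_s$ and limiting normal curvature $\kappa_\nu$ from \cite{SUY} and the cuspidal torsion $\kappa_t$ from \cite{MS} (see also \cite{MSUY}) — the swallowtail counterparts of the geodesic curvature, normal curvature and geodesic torsion of a curve on a regular surface — rewrite them in the present coordinates using \eqref{eq:dfetabibunu}, and compare. The comparison should identify $\tilde\kappa_g,\tilde\kappa_\nu,\tilde\kappa_t$ with $\kappa_s,\kappa_\nu,\kappa_t$ up to the single factor $\alpha(u)=\sgn(u)|\gamma'(u)|$; the reason $\kappa_s$ acquires $|\alpha|$ while $\kappa_\nu$ and $\kappa_t$ acquire $\alpha$ is that the coordinate expression for $\kappa_s$ carries no forced $\sgn(u)$, whereas those for $\kappa_\nu$ and $\kappa_t$ do, and the absolute value is precisely what compensates it so that each product is a genuine $C^\infty$ function.

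Finally I would treat the factor $\alpha$ itself: relating the coordinate $u$ to the parameter of \eqref{eq:frenet} accounts for $|\gamma'(u)|$, while the $\sgn(u)$ is forced by the $+0$-limit defining $\e$ together with Lemma~\ref{lem:23type}, namely the fact that $\hat\bgamma$ is a $3/2$-cusp so that $\hat\bgamma'$ reverses direction at the singular point; one then records the asserted regularity of $\alpha$. The step I expect to be the real obstacle is not the differentiation but the bookkeeping of orientations and signs: one has to match the conventions used in \cite{SUY,MS,MSUY} to define $\kappa_s,\kappa_\nu,\kappa_t$ with the normalizations fixed here — the choice of parameter via $\det(\hat\bgamma'',\hat\bgamma''',\nu)>0$, the $+0$-side selected in the definition of $\e$, and the sign $\ep'(0)>0$ in \eqref{eq:ep'} — and to check along the way that the a priori singular ($\sim 1/|\hat\bgamma'|\sim 1/|u|$) contributions cancel, so that \eqref{eq:frenetinv} holds as an identity of $C^\infty$ functions and not merely on $u\neq 0$.
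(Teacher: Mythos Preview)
Your plan is essentially the paper's proof: pass to coordinates with $S(f)=\{v=0\}$, write the Darboux frame explicitly via $\e=-f_v/|f_v|$ along the $u$-axis, compute $\tilde\kappa_g,\tilde\kappa_\nu,\tilde\kappa_t$ as the obvious inner products, and then compare with the coordinate formulas for $\kappa_s,\kappa_\nu,\kappa_t$ from \cite{SUY,MS} using \eqref{eq:dfetabibunu}. The one simplification you are missing is that the paper first refines the coordinates so that $|f_v|\equiv1$ (via $\tilde v=\int_0^v|f_v|\,dv$); this clears the $|f_v|$ denominators from your three displayed formulas and, more importantly, makes the factor $\alpha$ appear directly as $\ep$, since then $\hat\bgamma'=f_u=-\ep f_v$ gives $|\hat\bgamma'|=|\ep|$ and $\sgn(u)=\sgn(\ep)$ by \eqref{eq:ep'}. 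In particular your reading of $\alpha$ as an additional ``reparametrization factor'' to be carried alongside the computation is slightly off: no reparametrization is needed, and the $\ep$ (respectively $|\ep|$) is produced by the comparison itself, because the definitions of $\kappa_s,\kappa_\nu,\kappa_t$ carry powers of $|f_u|=|\ep|$ in their denominators. The only place where the bookkeeping is genuinely delicate is $\tilde\kappa_t$; there the paper works with $g$ from \eqref{eq:dfeta} and the identity $\det(a\times b,a\times c,d)=\det(a,b,c)\inner{a}{d}$ rather than with $\nu_u$ directly, which you may find cleaner than differentiating $\nu$.
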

We define
$
\kappa_g(u)=\sgn(\alpha(u))\kappa_s(u)
$,
and call the {\it geodesic curvature}.
Then 
$\tilde \kappa_g=\alpha\kappa_g$.
We take 
the coordinate system $(u,v)$ satisfying
$S(f)=\{v=0\}$. 
Setting 
$\tilde u=u$, $\tilde v=\int_0^v |f_v(u,v)|\,dv$,
one can see the coordinate system $(\tilde u,\tilde v)$ 
satisfies
$S(f)=\{\tilde v=0\}$ and $|f_{\tilde v}(\tilde u,\tilde v)|=1$.
Let $(u,v)$ be a coordinate system satisfying
$S(f)=\{v=0\}$ and $|f_v(u,v)|=1$.
We take the null vector field
$$
\eta(u)=\partial_u+\ep(u)\partial_v\quad(
\ep(0)=0,\ \ep'(0)>0)
$$
as above. Then $\ep=\alpha$.
Since $0$ is non-degenerate,
$\lambda_v\ne0$.
Thus $\det(g,f_v,\nu)\ne0$ at $0$.
\begin{lemma}\label{lem:enu}
Under the above settings, we have
$$
f_v(u,0)=-\e(u),\quad
f_v\times  g/|f_v\times  g|=\bnu(u).
$$
\end{lemma}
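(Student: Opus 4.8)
The plan is to carry out everything in the adapted coordinate system $(u,v)$ fixed just above the statement --- so that $S(f)=\{v=0\}$, $|f_v|\equiv 1$, and the null field is $\eta=\partial_u+\ep\partial_v$ with $\ep(0)=0,\ \ep'(0)>0$ --- and to read off both identities from the relations \eqref{eq:dfeta}--\eqref{eq:dfetabibunu} that were recorded for exactly this purpose. For the first identity, I would restrict \eqref{eq:dfeta} to the $u$-axis to get $f_u(u,0)=-\ep(u)\,f_v(u,0)$, so that the singular locus $\hat\bgamma(u)=f(u,0)$ satisfies $\hat\bgamma'(u)=-\ep(u)\,f_v(u,0)$: a scalar multiple of the unit vector $f_v(u,0)$ whose sign is $-\sgn\ep(u)=-\sgn u$ near $0$ by \eqref{eq:ep'}. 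Hence $\hat\bgamma'(u)/|\hat\bgamma'(u)|=-\sgn(u)\,f_v(u,0)$ for $u\neq 0$, and in particular $\e(0)=\lim_{u\to+0}\hat\bgamma'(u)/|\hat\bgamma'(u)|=-f_v(0,0)$. Since $\e$ is, by construction, the continuous unit vector field tangent to $\hat\bgamma$ away from $0$ which takes that value at $0$, and since $u\mapsto-f_v(u,0)$ is a continuous unit field with exactly those properties, continuity forces $\e(u)\equiv-f_v(u,0)$, that is, $f_v(u,0)=-\e(u)$.

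For the second identity I would first show that along the $u$-axis $f_v\times g$ is a nonvanishing multiple of $\bnu$, and then pin down its sign. For $v\neq 0$ the vector $g=(f_u+\ep f_v)/v$ is tangent to $f$, hence orthogonal to $\nu$; letting $v\to 0$ gives $\langle\bnu,g\rangle=0$ on the $u$-axis, and together with $\langle\bnu,f_v\rangle=0$ this places $\bnu$ in the orthogonal complement of $\operatorname{span}\{f_v,g\}$, which is a line because non-degeneracy gives $\det(g,f_v,\nu)\neq 0$ near $0$ on the $u$-axis. Thus $f_v\times g=c\,\bnu$ with $c\neq 0$, and to see $c>0$ I would differentiate \eqref{eq:dfeta} along the $u$-axis: the first and third lines of \eqref{eq:dfetabibunu} at $u=0$ give $f_{uu}(0,0)=-\ep'(0)f_v(0,0)$ and $f_{uuu}(0,0)=-\ep''(0)f_v(0,0)-2\ep'(0)f_{uv}(0,0)$, and since $g(0,0)=f_{uv}(0,0)$ by the second line, expanding the determinant yields $\det(\hat\bgamma''(0),\hat\bgamma'''(0),\nu(0))=2\ep'(0)^2\det(f_v(0,0),g(0,0),\bnu(0))$. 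The parameter of $\gamma$ was chosen (as permitted by Lemma \ref{lem:23type}) so that the left-hand side is positive, whence $\det(f_v,g,\bnu)(0)>0$, i.e. $c>0$ at $0$; and since $\det(f_v,g,\nu)$ keeps a constant sign along the $u$-axis near $0$, this persists, giving $f_v\times g/|f_v\times g|=\bnu$.

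The substitutions into \eqref{eq:dfetabibunu} are routine; the delicate point is the sign bookkeeping in the last step, where the orientation convention for the positive frame $\{\e,\bb,\bnu\}$, the sign of $\ep$ fixed by \eqref{eq:ep'}, and the normalization $\det(\hat\bgamma'',\hat\bgamma''',\nu)>0$ must all be tracked simultaneously to be certain the cross product lands on $+\bnu$ rather than on $-\bnu$.
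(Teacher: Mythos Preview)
Your proposal is correct and follows essentially the same route as the paper's proof. Both arguments restrict \eqref{eq:dfeta} to the $u$-axis and use the sign convention \eqref{eq:ep'} to identify $\e(0)=-f_v(0,0)$, then extend by continuity; and both fix the sign of $f_v\times g$ relative to $\bnu$ by expanding $\det(\hat\bgamma'',\hat\bgamma''',\,\cdot\,)$ via the relations \eqref{eq:dfetabibunu} at $u=0$ to obtain $2(\ep')^2\det(f_v,g,\,\cdot\,)$ and invoking the normalization $\det(\hat\bgamma'',\hat\bgamma''',\nu)>0$. The only cosmetic difference is that you spell out the orthogonality argument showing $f_v\times g\parallel\bnu$ (the paper takes this as immediate from $\eta f=vg$) and you carry out the sign determinant against $\bnu$ directly rather than against $f_v\times g$; these are equivalent since $\det(f_v,g,f_v\times g)=|f_v\times g|^2>0$.
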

\begin{proof}
We remark that by the assumption \eqref{eq:ep'}, $\sgn(u)\ep(u)>0$. 
By \eqref{eq:dfeta}, 
$$
\be(0)
=\lim_{u\to+0}\dfrac{f_u(u,0)}{|f_u(u,0)|}
=\lim_{u\to+0}\dfrac{-\ep(u)}{|\ep(u)|}f_v(u,0)
=-f_v(0,0),
$$
and hence it holds that $\be(u)=-f_v(u,0)$.
Next, since $\eta f=vg$,
\begin{equation}\label{eq:bnu}
\bnu(u)=
\pm \dfrac{f_v\times g}{|f_v\times g|}(u,0).
\end{equation}
On the other hand, by \eqref{eq:dfetabibunu},
$$
\det(f_{uu},f_{uuu},f_v\times g)
=
\det(-\ep' f_v,
-(\ep'' f_v+2\ep' f_{uv}),f_v\times g)
=
2(\ep')^2\det(f_v,
g,f_v\times g),
$$
we see that the $\pm$ sign in \eqref{eq:bnu}
should be $+$.
\end{proof}
\begin{proof}[Proof of Proposition {\rm \ref{prop:frenet}}]
Let $(u,v)$ be a coordinate system just after
Proposition \ref{prop:frenet}.
We see
$$
\tilde \kappa_g=\inner{\e'}{\bb}=-\inner{f_{uv}}{\bb}
=\det(f_{uv},\e,\nu)
=-\det(f_v,f_{uv},\nu).
$$
By the definition of the singular curvature
(\cite[(1.7)]{SUY}), and by \eqref{eq:dfetabibunu},
we have
$$
\kappa_s=\sgn(\ep)\sgn(\eta\lambda)\dfrac{\det(f_u,f_{uu},\bnu)}{|f_u|^3}
=\sgn(\ep)\sgn(\eta\lambda)\dfrac{\det(f_v,f_{uv},\bnu)}{|\ep|}. 
$$
Moreover,
$$
\eta\lambda=\eta\det(f_u,f_v,\bnu)
=\det(\eta f_u,f_v,\bnu)+\det(f_u,\eta f_v,\bnu)
=\det(-\ep f_v,g,\bnu)
=-\ep$$
holds on the $u$-axis.
Thus
$
\kappa_s=-\det(f_v,f_{uv},\bnu)/|\ep|,
$
and we have $\tilde\kappa_g=|\ep|\kappa_s$.

Next we consider $\tilde\kappa_\nu$.
By Lemma \ref{lem:enu} and \eqref{eq:dfetabibunu},
$$\tilde\kappa_\nu
=\inner{\e'}{\bnu}
=-\inner{f_{uv}}{\bnu}
=-\inner{g-\ep f_{vv}}{\bnu}
=\ep\inner{f_{vv}}{\bnu}.
$$
On the other hand, by the definition 
of the limiting normal curvature (\cite[(3.11)]{SUY})
and \eqref{eq:dfetabibunu},
$$
\kappa_\nu=
\dfrac{\inner{f_{uu}}{\bnu}}{|f_u|^2}
=
\dfrac{\inner{\ep^2 f_{vv}}{\bnu}}{|\ep|^2}
=
\inner{f_{vv}}{\bnu}
$$
holds, thus we have
$
\tilde\kappa_\nu=\ep\kappa_\nu
$. Finally, by Lemma \ref{lem:enu} and \eqref{eq:dfetabibunu},
\begin{equation}\label{eq:kappat100}
\begin{array}{rcl}
\tilde\kappa_t
&=&\inner{\bb'}{\bnu}
=\det(\be,\bnu,\bnu')\\
&=&
\dfrac{\det(f_v,f_v\times g,(f_v\times g)_u)}{|f_v\times g|^2}\\[3mm]
&=&
\dfrac{\det(f_v,f_v\times g,f_{uv}\times g)}{|f_v\times g|^2}
+
\dfrac{\det(f_v,f_v\times g,f_v\times g_u)}{|f_v\times g|^2}\\[3mm]
&=&
-\ep\dfrac{\det(f_v,f_v\times g,f_{vv}\times g)}{|f_v\times g|^2}
+
\dfrac{\det(f_v,f_v\times g,f_v\times g_u)}{|f_v\times g|^2}\\[3mm]
&=&
-
\ep\dfrac{\det(g,f_{vv},f_v)\inner{f_v}{g}}{|f_v\times g|^2}
-
\dfrac{\det(g,g_u,f_v)}{|f_v\times g|^2},
\end{array}
\end{equation}
where we used the formula
$
\det(a\times b,a\times c,d)
=
\det(a,b,c)\inner{a}{d}
$ $(a,b,c,d\in\R^3)$.
On the other hand, by the definition
of the cuspidal torsion (\cite[(5.1)]{MS}), \eqref{eq:dfeta} and 
\eqref{eq:dfetabibunu},
$$
\begin{array}{rcl}
\kappa_t
&=&
\dfrac{\det(f_u,\eta\eta f,\eta\eta f_u)}{|f_u\times \eta\eta f|^2}
-
\dfrac{\det(f_u,\eta\eta f,f_{uu})\inner{f_u}{\eta\eta f}}
{|f_u|^2|f_u\times \eta\eta f|^2}\\[3mm]
&=&
\dfrac{\det(-\ep f_v,\ep g, \ep g_u)}{\ep^4|f_v\times g|^2}
-
\dfrac{\det(-\ep f_v,\ep g,\ep^2 f_{vv})\inner{-\ep f_v}{\ep g}}
{\ep^6|f_v\times g|^2}\\[3mm]
&=&
-
\dfrac{\det(f_v,g,g_u)}{\ep|f_v\times g|^2}
-
\dfrac{\det(f_v,g,f_{vv})\inner{f_v}{g}}
{|f_v\times g|^2}
\end{array}
$$
holds, thus we have $\tilde\kappa_t=\ep \kappa_t$.
\end{proof}
\begin{remark}
Our formula \eqref{eq:frenet} 
depends on the choice of the parameter.
Usually the Frenet-Serret formula is written
by the arclength parameter.
However, in our case the arclength parameter
is not differentiable.
In \cite{su}, the half arc-length parameter
for plane cusp
was introduced. 
It is also well-defined and differentiable in our case.
If we take the half arc-length parameter $h$,
then the function $\alpha$ in \eqref{eq:frenetinv} is 
equal to $h$.
\end{remark}
\begin{remark}
We remark that
invariants of frames along curves in $\R^3$ with
singularities (framed singular curves) are studied in \cite{h}.
The frame considered in the present paper is
constructed by using the normal vector of given frontal.
So our invariants are related to the geometry of
the frontal.
See \cite{h} for the study of the 
properties of 
invariants of framed singular curves itself.
\end{remark}

\section{Developable surfaces along singular set}
\label{sec:devsurfs}
Let $f:(\R^2,0) \to (\R^{3},0)$ be a frontal and $\nu$ its
unit normal, and let $0$ be a singular point
of the second kind.
Throughout in this section, 
we take the coordinate system $(u,v)$ near $0$
satisfying $S(f)=\{v=0\}$.
Let $\{\e,\bb,\bnu\}$ be the Darboux frame 
defined in Subsection \ref{sec:frame}.
In this section, following \cite{HII,IzuOta,ist},
we consider developable surfaces along $S(f)$.
Developable surfaces along curves with singularities are considered in
\cite{h}.
See \cite{gray,Pot} for basic notions for ruled surfaces,
and \cite{Iz-Takeruled,Iz-Take,izu-take} for singularities of
ruled surfaces.
\subsection{Osculating developable surfaces}\label{sec:odf}
%%%%%%%%%%%%%%%%%%%
We assume that
$(\tilde\kappa_{\nu}(u),\tilde\kappa_t (u))\not= (0,0)$
in a small neighborhood of $0$.
By \eqref{prop:frenet}, $\tilde\kappa_\nu(0)=0$,
this assumption is equivalent to $\tilde\kappa_t (0)\ne0$.
Under this assumption,
we define a ruled surface
$
OD_{f}:I\times\R\lon\R^3
$
by
\begin{equation*}
OD_{f}(u,t)=f(u,0)+t\overline{D_o}(u)\quad
\left(\overline{D_o}(u)=
\frac{\tilde\kappa_t(u)\bm{e} (u)-\tilde\kappa_{\nu}(u)\bb(u)}
{\sqrt{\tilde\kappa_t(u)^2+\tilde\kappa_{\nu}(u)^2}}
\right),
\end{equation*}
and call an {\it osculating developable surface along\/} $f$.
Set
\begin{align}\label{eq:deltao}
\tilde\delta_o&=
 \tilde\kappa_g\big((\tilde\kappa_\nu)^2+(\tilde\kappa_t)^2\big)
 -\tilde\kappa_t(\tilde\kappa_\nu)'+(\tilde\kappa_t)'\tilde\kappa_\nu,
\end{align}
where ${~}'=d/d u$.
By \eqref{eq:frenet}, we see
\begin{equation}\label{eq:dop}
\overline{D_o}' 
=
\dfrac{\tilde\delta_o}{(\tilde\kappa_t^2+\tilde\kappa_{\nu}^2)^{3/2}}
(\tilde\kappa_\nu\be+\tilde\kappa_t\bb)
\end{equation}
and
$\det \big(\hat\bgamma', \overline{D_o}, \overline{D_o}'\big)=0$,
it holds that
$OD_{f}$ is a developable surface.
Setting 
$\lambda_o=\tilde\delta_o t + \tilde\kappa_{\nu}\ep
(\tilde\kappa_{\nu}^2 + \tilde\kappa_t^2)^{1/2}$,
it holds that
$
S(OD_f)=\{
\lambda_o(u,t)=0\}
$.
If $\tilde\delta_o(0)=0$, then
all the points on the ruling passing through $\hat\bgamma(0)$ are singular value.
When $\tilde\delta_o\ne0$, we set
\begin{equation}\label{eq:strsourod}
t_o(u)=-
\dfrac{
\tilde\kappa_{\nu} \ep
(\tilde\kappa_{\nu}^2 + \tilde\kappa_t^2)^{1/2}}{\tilde\delta_o},
\end{equation}
and
$\bs_o(u)=OD_f(u,t_o(u))$.
Then
\begin{equation}\label{eq:stro}
\bs_o
=
\hat\bgamma-
\dfrac{\inner{\hat\bgamma'}{\overline{D_o}'}}
{\inner{\overline{D_o}'}{\overline{D_o}'}}\overline{D_o}
\end{equation}
holds, and thus $\bs_o(u)$ is the striction curve 
(cf. \cite[Section 17.3]{gray}) of
$OD_f$.
In this case, we have
\begin{equation}\label{eq:sodp}
t_o'
=
-\dfrac{\tilde\sigma_o (\tilde\kappa_\nu^2+\tilde\kappa_t^2) 
+ \ep \tilde\kappa_t \tilde\delta_o^2}
{\sqrt{\tilde\kappa_\nu^2+\tilde\kappa_t^2}\tilde\delta_o^2},
\quad
\bs_o'
=
-\dfrac{\tilde\sigma_o}{\tilde\delta_o^2}
(\tilde\kappa_t \be - \tilde\kappa_\nu \bb),
\end{equation}
where
\begin{align*}
\tilde\sigma_o&=
-\ep \tilde\kappa_\nu \tilde\delta_o' 
- 
(-\ep' \tilde\kappa_\nu + \ep \tilde\kappa_g \tilde\kappa_t - 
2 \ep \tilde\kappa_\nu') \tilde\delta_o
\\
&=
\tilde\kappa_\nu \ep' 
\Big(\tilde\kappa_g(\tilde\kappa_\nu^2+\tilde\kappa_t^2) 
- \tilde\kappa_t \tilde\kappa_\nu'
+ \tilde\kappa_\nu \tilde\kappa_t'\Big) 
- 
\ep 
\Big(
 \tilde\kappa_t (2(\tilde\kappa_\nu')^2 
-\tilde\kappa_\nu \tilde\kappa_\nu'' )
+\tilde\kappa_\nu (-2 \tilde\kappa_\nu' \tilde\kappa_t' 
+\tilde\kappa_\nu \tilde\kappa_t'') \\
&\hspace{15mm}
+ \tilde\kappa_\nu(\tilde\kappa_\nu^2 + \tilde\kappa_t^2) \tilde\kappa_s'
+
3 \tilde\kappa_t (-\tilde\kappa_t \tilde\kappa_\nu' + 
    \tilde\kappa_\nu \tilde\kappa_t') \tilde\kappa_g + 
 \tilde\kappa_t (\tilde\kappa_\nu^2 + \tilde\kappa_t^2) \tilde\kappa_g^2
\Big).
\end{align*}
We have the following characterization of 
singularities of the osculating developable surface
using $\tilde\delta_o$ and $\tilde\sigma_o$.
Except for $u=0$, singular points are cuspidal edges,
we stick to our consideration to $u=0$.
We have $t_o(0)=0$,
\begin{equation}\label{eq:deltaop0}
\tilde\delta_o=
\tilde\kappa_t\big(
\tilde\kappa_g\tilde\kappa_t-\tilde\kappa_\nu'
\big),\quad
\tilde\delta_o'=
\tilde\kappa_t\big(
\tilde\kappa_t\tilde\kappa_g'
+
2 \tilde\kappa_g \tilde\kappa_t'
-
\tilde\kappa_\nu''\big)
\quad
\text{at}\quad u=0
\end{equation}
and
\begin{equation}\label{eq:sigmaop0}
\tilde\sigma_o
=
0,\quad
\tilde\sigma_o'
=
\tilde\kappa_t\ep'
\big(
\tilde\kappa_g\tilde\kappa_t-3 \tilde\kappa_\nu'
\big)
\big(
\tilde\kappa_g\tilde\kappa_t- \tilde\kappa_\nu'
\big)
\quad
\text{at}\quad u=0.
\end{equation}
\begin{theorem}\label{thm:odsing} 
We assume that
$\tilde\kappa_t(0)\ne0$.
If\/ $OD_f$ satisfies\/ $\tilde\delta_o(0)\ne0$,
$($namely, $\tilde\kappa_g(0) \tilde\kappa_t(0) -\tilde\kappa_\nu'(0)\ne0)$,
then the singular point\/ $(0,0)$ 
of\/ $OD_{f}$ is
\begin{enumerate}
\item\label{itm:ce} 
never be a cuspidal edge,
\item\label{itm:sw} swallowtail if and only if\/
$\tilde\kappa_g(0) \tilde\kappa_t(0) -3\tilde\kappa_\nu'(0)\ne0$.
\end{enumerate}
If\/ $OD_f$ satisfies\/ $\tilde\delta_o(0)=0$ 
$($namely, $\tilde\kappa_g(0) \tilde\kappa_t(0) -\tilde\kappa_\nu'(0)=0)$,
then the singular point\/ $(0,0)$ 
of\/ $OD_{f}$ is
\begin{enumerate}
\setcounter{enumi}{2}
\item\label{itm:cbkodf} cuspidal beaks if and only if\/
$\big(\tilde\kappa_t\tilde\kappa_g'+2 \tilde\kappa_g \tilde\kappa_t'
-\tilde\kappa_\nu''\big)(0)\ne0$, and\/ $\tilde\kappa_\nu'(0)\ne0$.
\end{enumerate}
\end{theorem}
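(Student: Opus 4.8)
The plan is to apply the standard Saji--Kokubu--Rieger--Suyama--Yamada (KRSUY) type criteria for cuspidal edge, swallowtail, and cuspidal beaks, adapted to the ruled surface $OD_f$. Since $OD_f(u,t) = \hat\bgamma(u) + t\,\overline{D_o}(u)$ is already given with an explicit ruling direction, the null vector field and the identification function $\lambda_o$ with $S(OD_f) = \{\lambda_o = 0\}$ are available from the preceding discussion: we have $\lambda_o = \tilde\delta_o t + \tilde\kappa_\nu\ep(\tilde\kappa_\nu^2+\tilde\kappa_t^2)^{1/2}$. The first task is to verify that $(0,0)$ is a non-degenerate singular point of $OD_f$ in the relevant cases, i.e.\ that $d\lambda_o(0,0)\ne 0$; for this one computes $(\lambda_o)_t(0,0) = \tilde\delta_o(0)$ and $(\lambda_o)_u(0,0)$, using $\tilde\kappa_\nu(0)=0$ from Proposition \ref{prop:frenet}. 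When $\tilde\delta_o(0)\ne 0$ non-degeneracy is immediate from the $t$-derivative; when $\tilde\delta_o(0)=0$ one checks the $u$-derivative is nonzero, which will come out proportional to $\tilde\kappa_\nu'(0)$ (hence the hypothesis $\tilde\kappa_\nu'(0)\ne 0$ in part \eqref{itm:cbkodf}).

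Next I would set up the singular curve and null direction explicitly. A null vector field $\tilde\eta$ for $OD_f$ is spanned by $\partial_t$ along the $t$-direction when $\overline{D_o}$ is the ruling, more precisely $\ker d(OD_f)$ at a singular point is spanned by $\partial_t$ since $(OD_f)_t = \overline{D_o}$ is a unit vector, and the singular set is $\{\lambda_o=0\}$ which, when $\tilde\delta_o(0)\ne0$, is a graph $t = t_o(u)$ with $t_o$ as in \eqref{eq:strsourod}, i.e.\ the striction curve $\bs_o$. The criterion of Fact \ref{fact:cri} (in the KRSUY form: cuspidal edge iff $\tilde\eta\lambda_o\ne 0$ on the singular curve, swallowtail iff $\tilde\eta\lambda_o=0$, $\tilde\eta\tilde\eta\lambda_o\ne0$ there) then translates directly: $\tilde\eta\lambda_o = (\lambda_o)_t = \tilde\delta_o$, so along the singular curve $\tilde\eta\lambda_o(0,0) = \tilde\delta_o(0)\ne0$ in the first regime --- but this would force a cuspidal edge, \emph{contradicting} the claim in \eqref{itm:ce}. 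The resolution, and the genuinely delicate point, is that $OD_f$ is only a \emph{frontal}, not a front: the map $L=(OD_f,\nu_{OD})$ need not be an immersion, so the direct KRSUY front criteria do not apply and one must instead use the frontal version. Concretely, one uses the fact (from \cite{krsuy} and Fact \ref{fact:cri}'s underlying machinery) that the singular point of the second kind, i.e.\ with $\phi(0)=0$, $\phi'(0)\ne0$ where $\phi = \det(\gamma_{OD}',\tilde\eta)$, is a swallowtail \emph{provided $OD_f$ is a front there}, and that $OD_f$ fails to be a front exactly along the ruling where the developable degenerates --- which is why $(0,0)$ can never be a cuspidal edge: the relevant $\phi$ vanishes at $0$ because the striction curve's tangent $\bs_o'$ from \eqref{eq:sodp} is parallel to the ruling $\overline{D_o}$ at $u=0$ (both computations reducing to $\tilde\sigma_o(0)=0$ from \eqref{eq:sigmaop0} and the explicit form of $\overline{D_o}(0) = (\tilde\kappa_t\be - \tilde\kappa_\nu\bb)/\sqrt{\tilde\kappa_\nu^2+\tilde\kappa_t^2}\big|_0 = \be(0)$).

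So the real plan is: (i) compute $\bs_o'(0)$ and $\overline{D_o}(0)$ and observe they are linearly dependent using $\tilde\sigma_o(0)=0$, ruling out the cuspidal edge and establishing that we are at a second-kind-like singularity; (ii) for the swallowtail in \eqref{itm:sw}, apply the second-kind criterion, which requires the appropriate second-order non-vanishing --- differentiate the dependence $\bs_o' \parallel \overline{D_o}$ once more, i.e.\ use $\tilde\sigma_o'(0) = \tilde\kappa_t(0)\ep'(0)(\tilde\kappa_g\tilde\kappa_t - 3\tilde\kappa_\nu')(0)(\tilde\kappa_g\tilde\kappa_t-\tilde\kappa_\nu')(0)$ from \eqref{eq:sigmaop0}; since $\tilde\kappa_t(0)\ne0$, $\ep'(0)>0$, and $(\tilde\kappa_g\tilde\kappa_t-\tilde\kappa_\nu')(0)\ne0$ by hypothesis, this is nonzero iff $(\tilde\kappa_g\tilde\kappa_t - 3\tilde\kappa_\nu')(0)\ne0$, giving exactly condition \eqref{itm:sw}; (iii) for the cuspidal beaks case \eqref{itm:cbkodf}, when $\tilde\delta_o(0)=0$ the singular set is no longer a graph over $u$ and one applies the cuspidal beaks criterion of KRSUY / Izumiya--Saji--Takahashi: the Hessian of $\lambda_o$ at $(0,0)$ must be non-degenerate and indefinite, and the null direction condition must hold. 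Here $(\lambda_o)_t(0,0)=\tilde\delta_o(0)=0$, $(\lambda_o)_{tt}=0$ identically (since $\lambda_o$ is affine in $t$), $(\lambda_o)_{tu}(0,0) = \tilde\delta_o'(0) = \tilde\kappa_t(0)(\tilde\kappa_t\tilde\kappa_g' + 2\tilde\kappa_g\tilde\kappa_t' - \tilde\kappa_\nu'')(0)$ from \eqref{eq:deltaop0}, and $(\lambda_o)_{uu}(0,0)$ can be computed from $\tilde\kappa_\nu'(0)\ne0$; the $2\times 2$ Hessian then has determinant $-\big((\lambda_o)_{tu}(0,0)\big)^2 < 0$ precisely when $(\lambda_o)_{tu}(0,0)\ne0$, i.e.\ $(\tilde\kappa_t\tilde\kappa_g' + 2\tilde\kappa_g\tilde\kappa_t' - \tilde\kappa_\nu'')(0)\ne0$, which is indefinite automatically, and the remaining null-direction transversality is guaranteed by $\tilde\kappa_\nu'(0)\ne0$ (non-degeneracy, established in the first paragraph).

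The main obstacle I anticipate is not any single computation but correctly invoking the \emph{frontal} rather than front versions of the cuspidal-edge/swallowtail/cuspidal-beaks criteria: one must carefully identify the unit normal $\nu_{OD}$ of the developable surface, check that $OD_f$ is indeed a frontal (which follows from $\det(\hat\bgamma',\overline{D_o},\overline{D_o}')=0$, the developability shown in \eqref{eq:dop}), and then verify that along the critical ruling the Legendrian lift degenerates in exactly the way that forbids a cuspidal edge and permits a swallowtail. Everything else --- the derivative computations of $\tilde\delta_o$, $\tilde\sigma_o$, $\lambda_o$ at $u=0$ --- is bookkeeping already recorded in \eqref{eq:deltaop0} and \eqref{eq:sigmaop0}, so the proof mainly consists of feeding these into the criteria and reading off the stated inequalities.
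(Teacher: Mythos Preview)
Your proposal contains a genuine error in identifying the null direction of $OD_f$. You claim $\ker d(OD_f)$ is spanned by $\partial_t$, but $(OD_f)_t = \overline{D_o}$ is a \emph{unit} vector, hence nonzero; the null vector must therefore have a nonzero $\partial_u$ component. The paper computes it explicitly as
\[
\eta_o = (\tilde\kappa_\nu^2+\tilde\kappa_t^2)\,\partial_u - \ep\,\tilde\kappa_t(\tilde\kappa_\nu^2+\tilde\kappa_t^2)^{1/2}\,\partial_t.
\]
With the correct $\eta_o$ one has $\eta_o\bnu(0)\ne0$, so $OD_f$ is a genuine \emph{front} at $(0,0)$ and Fact~\ref{fact:cri} applies directly; your detour through ``frontal but not front'' is both unnecessary and incorrect. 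The actual reason $(0,0)$ is never a cuspidal edge is that the correctly computed $\phi$ in \eqref{eq:phiod} is a nonzero multiple of $\tilde\sigma_o$, and $\tilde\sigma_o(0)=0$ by \eqref{eq:sigmaop0}. Your geometric justification ``$\bs_o'(0)$ parallel to $\overline{D_o}(0)$'' is vacuous: by \eqref{eq:sodp}, $\bs_o'$ is always a scalar multiple of $\tilde\kappa_t\be - \tilde\kappa_\nu\bb$, hence always parallel to $\overline{D_o}$. The meaningful statement is $\bs_o'(0)=0$, i.e.\ $\tilde\sigma_o(0)=0$, which is precisely $\phi(0)=0$ for the correct $\eta_o$.

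There is a second error in the degenerate case $\tilde\delta_o(0)=0$. You assert that $(\lambda_o)_u(0,0)$ is proportional to $\tilde\kappa_\nu'(0)$, but in fact $(\lambda_o)_u(0,0)=0$: since $\tilde\kappa_\nu(0)=0$ and $\ep(0)=0$, the product rule gives $(\tilde\kappa_\nu\ep)'(0)=0$. Hence $d\lambda_o(0,0)=0$ whenever $\tilde\delta_o(0)=0$, and the singular point is degenerate. The cuspidal beaks criterion then requires the Hessian condition (which you treat correctly, via $(\lambda_o)_{tt}=0$ and $(\lambda_o)_{ut}(0,0)=\tilde\delta_o'(0)$) \emph{together with} $\eta_o\eta_o\lambda_o(0,0)\ne0$; it is this last condition, not any non-degeneracy of $d\lambda_o$, that is equivalent to $\tilde\kappa_\nu'(0)\ne0$. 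Once the null vector is corrected and the front property is established, your use of \eqref{eq:deltaop0} and \eqref{eq:sigmaop0} to read off the swallowtail and beaks conditions is sound and matches the paper's argument.
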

\begin{proof}
By \eqref{eq:strsourod},
and by a calculation,
the null vector field $\eta_o$ of $OD_f$ is
$$
\eta_o
=
(\tilde\kappa_\nu^2+\tilde\kappa_t^2)
\partial_u
-\ep \tilde\kappa_t (\tilde\kappa_\nu^2+\tilde\kappa_t^2)^{1/2}
\partial_t.
$$
Since $\bnu$ is a unit normal vector to $OD_f$ and
$\eta_o\bnu(0)\ne0$, $OD_f$ is front at $(0,0)$.
The function $\phi$ in \eqref{eq:criphi} is
\begin{equation}\label{eq:phiod}
\phi(u)=\det\pmt{
1&t_o'\\
\tilde\kappa_\nu^2+\tilde\kappa_t^2&
-\ep \tilde\kappa_t (\tilde\kappa_\nu^2+\tilde\kappa_t^2)^{1/2}
}=
\dfrac{\tilde\sigma_o(\tilde\kappa_\nu^2+\tilde\kappa_t^2)^{3/2}}
{\tilde\delta_o^2}.
\end{equation}
Since
$\ep(0)=0$, 
the condition $\phi(0)=0$ is equivalent to $\tilde\sigma_o(0)=0$,
and the condition $\phi(0)=\phi'(0)=0$ is equivalent to
$\tilde\sigma_o(0)=\tilde\sigma_o'(0)=0$.
Since always $\tilde\sigma_o(0)=0$ holds,
we have the assertion \eqref{itm:ce}.
By \eqref{eq:sigmaop0}, when
$\tilde\delta_o(0)\ne0$, then
$\tilde\sigma_o'(0)\ne0$ is equivalent to
$\tilde\kappa_g(0) \tilde\kappa_t(0) -3\tilde\kappa_\nu'(0)\ne0$.
Thus we have the assertion \eqref{itm:sw}.
If $\tilde\delta_0(0)=0$, then
$d\lambda_o=0$ holds, and
since $(\lambda_o)_{tt}=0$,
$\det\hess \lambda_o (0,0)<0$ is equivalent to
$(\lambda_o)_{ut}\ne0$, and it is equivalent to 
$\tilde\delta_o'(0)\ne0$.
Moreover, $\eta_o\eta_o\lambda_o(0,0)\ne0$ is
equivalent to $\tilde\kappa_\nu'(0)\ne0$.
Thus we have the assertion \eqref{itm:cbkodf}.
\end{proof}
\subsection{Normal developable surfaces}\label{sec:ndf}
%%%%%%%%%%%%%%%%%%%%%%%%%%%%%
We assume that
$(\tilde\kappa_t(u),\tilde\kappa_g(u))\not= (0,0)$.
Under this assumption,
we define a ruled surface
$
ND_{f}:I\times \R\lon \R^3
$
by
\[
ND_{f}(u,t)=f (u,0)+t\overline{D_n}(u)\quad
\left(\overline{D_n}(u)=
\frac{\tilde\kappa_t(u)\bm{e} (u)+\tilde\kappa_g(u)\bm{\nu}(u)}
{\sqrt{\tilde\kappa_t(u)^2+\tilde\kappa_g(u)^2}}\right).
\]
and call a {\it normal developable surface along\/} $f$.
Set
\begin{equation}\label{eq:deltab}
\tilde\delta_n=\tilde\kappa_\nu(\tilde\kappa_g^2 + \tilde\kappa_t^2) 
-\tilde\kappa_g\tilde\kappa_t' + 
 \tilde\kappa_t\tilde\kappa_g'.
\end{equation}
By \eqref{eq:frenet}, we see
\begin{equation}\label{eq:dnp}
\overline{D_n}
' 
=
\dfrac{\tilde\delta_n}{(\tilde\kappa_t ^2 + \tilde\kappa_g ^2)^{3/2}}
(-\tilde\kappa_g \bm{e} + \tilde\kappa_t \bm{\nu}),
\end{equation}
and 
$\det \big(\hat\bgamma', \overline{D_n}, \overline{D_n}'\big)=0$,
it holds that
$ND_{f}$ is a developable surface.
Setting 
$\lambda_n=-\tilde\delta_n t + \tilde\kappa_g\ep
(\tilde\kappa_g^2 + \tilde\kappa_t^2)^{1/2}$,
it holds that
$
S(ND_f)=\{
\lambda_n(u,t)=0\}
$.
If $\tilde\delta_n(0)=0$, then
all the points on the ruling passing through $\hat\bgamma(0)$ are singular value.
When $\tilde\delta_n\ne0$, we set
$$
t_n(u)=
\dfrac{
\tilde\kappa_{g} \ep
(\tilde\kappa_{g}^2 + \tilde\kappa_t^2)^{1/2}}{\tilde\delta_n}
$$
and
$\bs_n=ND_f(u,t_n(u))$.
Then
\begin{equation}\label{eq:strn}
\bs_n
=
\hat\bgamma-
\dfrac{\inner{\hat\bgamma'}{\overline{D_n}'}}
{\inner{\overline{D_n}'}{\overline{D_n}'}}\overline{D_n}
\end{equation}
holds, and thus $\bs_n(u)$ is the striction curve of
$ND_f$.
In this case, we have
\begin{equation}\label{eq:sndp}
t_n'
=
\dfrac{\sigma_n (\tilde\kappa_g^2+\tilde\kappa_t^2) 
- \ep \tilde\kappa_t \delta_n^2}
{\sqrt{\tilde\kappa_g^2+\tilde\kappa_t^2}\delta_n^2},
\quad
\bs_n'
=
\dfrac{\tilde\sigma_n}{\tilde\delta_n^2}
(\tilde\kappa_t \be + \tilde\kappa_g \bnu),
\end{equation}
where
\begin{align*}
\tilde\sigma_n&=
-\ep \tilde\kappa_g \tilde\delta_n' 
+ 
(\ep' \tilde\kappa_g + \ep \tilde\kappa_g \tilde\kappa_t + 
2 \ep \tilde\kappa_g') \tilde\delta_n
\\
&=
\tilde\kappa_g \ep' 
\Big(\tilde\delta_n (\tilde\kappa_g^2 + \tilde\kappa_t^2) 
+ 
\tilde\kappa_t \tilde\kappa_g' - 
\tilde\kappa_g \tilde\kappa_t'\Big) 
+
\ep
\Big(
 \tilde\kappa_t (2 \tilde\kappa_g'^2 
-\tilde\kappa_g \tilde\kappa_g'') 
+\tilde\kappa_g (-2 \tilde\kappa_g' \tilde\kappa_t' + 
      \tilde\kappa_g \tilde\kappa_t'') \\
&\hspace{15mm}
- 
 \tilde\kappa_g (\tilde\kappa_g^2 + \tilde\kappa_t^2) \tilde\kappa_\nu'
+
3 \tilde\kappa_t (\tilde\kappa_t \tilde\kappa_g' - 
    \tilde\kappa_g \tilde\kappa_t') \tilde\kappa_\nu + 
 \tilde\kappa_t (\tilde\kappa_g^2 + \tilde\kappa_t^2) \tilde\kappa_\nu^2
\Big).
\end{align*}
Similarly to the case of the osculating developable surface
we have the following characterization of 
singularities of the normal developable surfaces.
We also note that $t_n(0)=0$,
\begin{equation}\label{eq:deltanp0}
\tilde\delta_n=
\tilde\kappa_g'\tilde\kappa_t-\tilde\kappa_g\tilde\kappa_t'
,\quad
\tilde\delta_n'=
\tilde\kappa_\nu'(\tilde\kappa_g^2+\tilde\kappa_t^2)
+
\tilde\kappa_g''\tilde\kappa_t-\tilde\kappa_g\tilde\kappa_t''
\quad
\text{at}\quad u=0
\end{equation}
and
\begin{equation}\label{eq:sigmanp0}
\tilde\sigma_n
=
\ep'\tilde\kappa_g
(\tilde\kappa_g'\tilde\kappa_t-\tilde\kappa_g\tilde\kappa_t')
,\quad
\tilde\sigma_n'
=
(\tilde\kappa_g'\tilde\kappa_t-\tilde\kappa_g\tilde\kappa_t')
(3\ep'\tilde\kappa_g'+\ep''\tilde\kappa_g)
\quad
\text{at}\quad u=0.
\end{equation}
\begin{theorem}\label{thm:ndsing} 
We assume that
$(\tilde\kappa_t(0),\tilde\kappa_g(0))\not= (0,0)$.
If\/ $ND_f$ satisfies\/ $\tilde\delta_n(0)\ne0$,
$($namely, 
$\tilde\kappa_g'(0)\tilde\kappa_t(0)-
\tilde\kappa_g(0)\tilde\kappa_t'(0)
\ne0)$,
then the singular point\/ $(0,0)$ 
of\/ $ND_{f}$ is 
\begin{enumerate}
\item\label{itm:cen} 
cuspidal edge if and only if\/
$\tilde\kappa_g\ne0$ holds,
\item\label{itm:swn} swallowtail if and only if\/
$\tilde\kappa_g=0$ holds.
\end{enumerate}
If\/ $ND_f$ satisfies\/ $\tilde\delta_n(0)=0$
$($namely, 
$\tilde\kappa_g'(0)\tilde\kappa_t(0)-
\tilde\kappa_g(0)\tilde\kappa_t'(0)
=0)$,
then the singular point\/ $(0,0)$ 
of\/ $ND_{f}$ is not a cuspidal beaks.
\end{theorem}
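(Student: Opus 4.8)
The plan is to mimic the proof of Theorem~\ref{thm:odsing}: first determine the null vector field of $ND_f$, verify that $ND_f$ is a front at $(0,0)$, and then read off the singularity type from Fact~\ref{fact:cri} together with the boundary values \eqref{eq:deltanp0}, \eqref{eq:sigmanp0}. First I would compute the null vector field $\eta_n$ along $S(ND_f)$. Since $(ND_f)_t=\overline{D_n}\ne\bo$ everywhere, $\ker d(ND_f)$ at a singular point is spanned by some $a\,\partial_u+b\,\partial_t$; using $\hat\bgamma'(u)=\ep(u)\be(u)$ (which follows from \eqref{eq:dfeta} and Lemma~\ref{lem:enu}), the formula \eqref{eq:dnp}, and evaluating on $S(ND_f)=\{t=t_n(u)\}$, a short computation gives
\[
\eta_n=(\tilde\kappa_g^2+\tilde\kappa_t^2)\,\partial_u-\ep\,\tilde\kappa_t\,(\tilde\kappa_g^2+\tilde\kappa_t^2)^{1/2}\,\partial_t ,
\]
whose coefficients depend on $u$ alone, so $\eta_n$ extends to a neighborhood of $0$, and $\eta_n(0,0)=(\tilde\kappa_g^2+\tilde\kappa_t^2)(0)\,\partial_u\ne\bo$ by hypothesis. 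Since $\hat\bgamma'$, $\overline{D_n}$ and $\overline{D_n}'$ all lie in $\spann{\be,\bnu}_\R$, the vector $\bb$ is a unit normal of $ND_f$ (this is the defining property of the normal developable surface); and because $\eta_n\bb(0,0)=(\tilde\kappa_g^2+\tilde\kappa_t^2)(0)\bigl(-\tilde\kappa_g(0)\be(0)+\tilde\kappa_t(0)\bnu(0)\bigr)\ne\bo$ by \eqref{eq:frenet} and the assumption $(\tilde\kappa_t(0),\tilde\kappa_g(0))\ne(0,0)$, the Legendrian lift $(ND_f,\bb)$ is an immersion at $(0,0)$; hence $ND_f$ is a front there.

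When $\tilde\delta_n(0)\ne0$, the point $(0,0)$ is a non-degenerate singular point (indeed $(\lambda_n)_t(0,0)=-\tilde\delta_n(0)\ne0$), so Fact~\ref{fact:cri} applies. Parameterizing $S(ND_f)$ by $u\mapsto(u,t_n(u))$ and using \eqref{eq:sndp}, a computation parallel to \eqref{eq:phiod} yields
\[
\phi(u)=\det\pmt{1 & t_n'\\ \tilde\kappa_g^2+\tilde\kappa_t^2 & -\ep\tilde\kappa_t(\tilde\kappa_g^2+\tilde\kappa_t^2)^{1/2}}
=-\dfrac{\tilde\sigma_n\,(\tilde\kappa_g^2+\tilde\kappa_t^2)^{3/2}}{\tilde\delta_n^{\,2}} .
\]
As the factor $(\tilde\kappa_g^2+\tilde\kappa_t^2)^{3/2}/\tilde\delta_n^{\,2}$ is smooth and nonvanishing near $0$, Fact~\ref{fact:cri} says $(0,0)$ is a cuspidal edge iff $\tilde\sigma_n(0)\ne0$ and a swallowtail iff $\tilde\sigma_n(0)=0$ and $\tilde\sigma_n'(0)\ne0$. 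By \eqref{eq:deltanp0} and \eqref{eq:sigmanp0} one has $\tilde\sigma_n(0)=\ep'(0)\tilde\kappa_g(0)\tilde\delta_n(0)$ with $\ep'(0)>0$, so $\tilde\sigma_n(0)\ne0\Longleftrightarrow\tilde\kappa_g(0)\ne0$, which gives \eqref{itm:cen}; and if $\tilde\kappa_g(0)=0$ then $\tilde\delta_n(0)=\tilde\kappa_g'(0)\tilde\kappa_t(0)\ne0$ forces $\tilde\kappa_g'(0)\ne0$ and $\tilde\kappa_t(0)\ne0$, whence $\tilde\sigma_n'(0)=3\ep'(0)\tilde\kappa_t(0)\tilde\kappa_g'(0)^2\ne0$ automatically by \eqref{eq:sigmanp0}, giving \eqref{itm:swn} and exhausting all cases.

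When $\tilde\delta_n(0)=0$ I would split into two sub-cases. If $\tilde\kappa_g(0)\ne0$, then $(\lambda_n)_u(0,0)=\tilde\kappa_g(0)\ep'(0)(\tilde\kappa_g^2+\tilde\kappa_t^2)^{1/2}(0)\ne0$, so $d\lambda_n(0,0)\ne\bo$ and $(0,0)$ is a non-degenerate singular point, hence not a cuspidal beaks. If $\tilde\kappa_g(0)=0$, then $\tilde\delta_n(0)=\tilde\kappa_g'(0)\tilde\kappa_t(0)=0$ together with $(\tilde\kappa_g(0),\tilde\kappa_t(0))\ne(0,0)$ forces $\tilde\kappa_t(0)\ne0$ and $\tilde\kappa_g'(0)=0$. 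Writing $\eta_n=a\,\partial_u+b\,\partial_t$ with $a=\tilde\kappa_g^2+\tilde\kappa_t^2$ and $b=-\ep\tilde\kappa_t\,a^{1/2}$ (so $b(0)=0$), and $\lambda_n=-\tilde\delta_n t+h$ with $h(u)=\tilde\kappa_g\ep(\tilde\kappa_g^2+\tilde\kappa_t^2)^{1/2}$: setting $q=\tilde\kappa_g\ep$ one checks $q(0)=q'(0)=q''(0)=0$ (using $\ep(0)=\tilde\kappa_g(0)=\tilde\kappa_g'(0)=0$), whence $h(0)=h'(0)=h''(0)=0$, and a short computation then gives
\[
\eta_n\eta_n\lambda_n(0,0)=a(0)\bigl(a'(0)h'(0)+a(0)h''(0)\bigr)=0 .
\]
Since a cuspidal beaks requires $\eta\eta\lambda\ne0$ at the singular point (cf.\ the proof of Theorem~\ref{thm:odsing}), the point $(0,0)$ is again not a cuspidal beaks. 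I expect this last step to be the main obstacle: unlike in Theorem~\ref{thm:odsing}, where $\tilde\kappa_\nu(0)=0$ automatically forces $d\lambda_o(0,0)=\bo$, here the degenerate case genuinely bifurcates, and the sub-case $\tilde\kappa_g(0)=0$ requires the chain of implications $\tilde\delta_n(0)=0,\ \tilde\kappa_g(0)=0\Rightarrow\tilde\kappa_g'(0)=0\Rightarrow h'(0)=h''(0)=0$ to make $\eta_n\eta_n\lambda_n$ vanish at the origin.
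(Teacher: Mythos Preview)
Your proof is correct and follows essentially the same route as the paper's own argument: same null vector field $\eta_n$, same verification that $ND_f$ is a front via $\eta_n\bb(0)\ne0$, same formula \eqref{eq:phind} for $\phi$, and the same reduction of (1)--(2) to the values \eqref{eq:deltanp0}, \eqref{eq:sigmanp0}. For the cuspidal-beaks exclusion the paper is terse (it just notes that $d\lambda_n(0,0)=0$ forces $\tilde\kappa_g(0)=\tilde\kappa_g'(0)=0$ and then asserts the relevant second-order condition fails), whereas you spell out the two sub-cases and explicitly compute $\eta_n\eta_n\lambda_n(0,0)=0$; this is the same argument, only written out in more detail.
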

\begin{proof}
By \eqref{eq:strsourod},
and a calculation,
the null vector field $\eta_n$ of $ND_f$ is
$$
\eta_n
=
(\tilde\kappa_g^2+\tilde\kappa_t^2)
\partial_u
-\ep \tilde\kappa_t (\tilde\kappa_g^2+\tilde\kappa_t^2)^{1/2}
\partial_t.
$$
Since $\bb$ is a unit normal vector to $ND_f$ and
$\eta_n\bb(0)\ne0$, $ND_f$ is front at $(0,0)$.
The function $\phi$ in \eqref{eq:criphi} is
\begin{equation}\label{eq:phind}
\phi(u)=\det\pmt{
1&t_n'\\
\tilde\kappa_g^2+\tilde\kappa_t^2&
-\ep \tilde\kappa_t (\tilde\kappa_g^2+\tilde\kappa_t^2)^{1/2}
}=
-\dfrac{\tilde\sigma_n(\tilde\kappa_g^2+\tilde\kappa_t^2)^{3/2}}
{\tilde\delta_n^2}.
\end{equation}
Since
$\ep(0)=0$, 
the condition $\phi(0)=0$ is equivalent to 
$\tilde\sigma_n(0)=
\big(\ep'\tilde\kappa_g 
(\tilde\kappa_t\tilde\kappa_g'-\tilde\kappa_g\tilde\kappa_t')\big)(0)=0$,
and the condition $\phi(0)=\phi'(0)=0$ is equivalent to
$\tilde\sigma_n(0)=0$, and
$\tilde\sigma_n'=(\tilde\kappa_t\tilde\kappa_g'-\tilde\kappa_g\tilde\kappa_t')
(3\ep'\tilde\kappa_g'+\tilde\kappa_g \ep'')(0)=0$.
Since 
$\tilde\delta_n
=\tilde\kappa_t\tilde\kappa_g'-\tilde\kappa_g\tilde\kappa_t'$ at $0$,
$\tilde\kappa_g'(0)\ne0$ holds under $\tilde\kappa_g(0)=0$.
Thus
the assertions \eqref{itm:cen} and \eqref{itm:swn} hold.
Since 
$(\lambda_n)_{u}=\ep'\tilde\kappa_g(\tilde\kappa_g^2+\tilde\kappa_t^2)^{1/2}$,
$(\lambda_n)_{t}=\tilde\kappa_t\tilde\kappa_g'-\tilde\kappa_g\tilde\kappa_t'$
holds, $d\lambda_n(0,0)=0$ under the assumption 
$(\tilde\kappa_t(0),\tilde\kappa_g(0))\not= (0,0)$
is equivalent to $\tilde\kappa_g(0)=\tilde\kappa_g'(0)=0$.
A necessary condition that $0$ is a cuspidal beaks
is 
$\lambda_n''(0,0)\ne0$.
However it does not hold under the condition
$\tilde\kappa_g(0)=\tilde\kappa_g'(0)=0$.
Thus we have the last assertion.
\end{proof}

Here we give two examples.
\begin{example}[Standard swallowtail]
The standard swallowtail is $f:(u,v)\mapsto(u,4v^3+2uv,3v^4+uv^2)$.
%$$
%f(u,v)=
%(-6 u^2+v,4 u^3+2 u (-6 u^2+v),3 u^4+u^2 (-6 u^2+v)).
%$$
The normal developable surface $ND_f$ can be
written as Figure \ref{fig:odfsw11}.
Since $f$ is a tangent developable surface, 
$OD_f$ coincides with $f$.
\begin{figure}[!ht]
\centering
  \includegraphics[width=.2\linewidth]{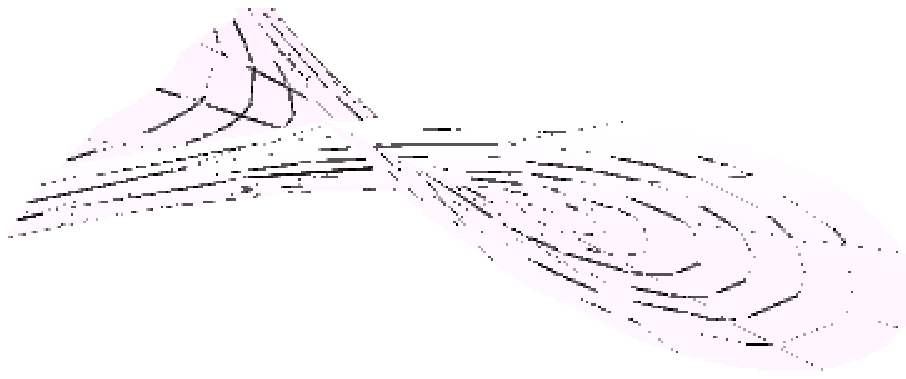}
% \hspace{2mm}
%  \includegraphics[width=.2\linewidth]{fig01-02odfs.eps}
 \hspace{2mm}
  \includegraphics[width=.2\linewidth]{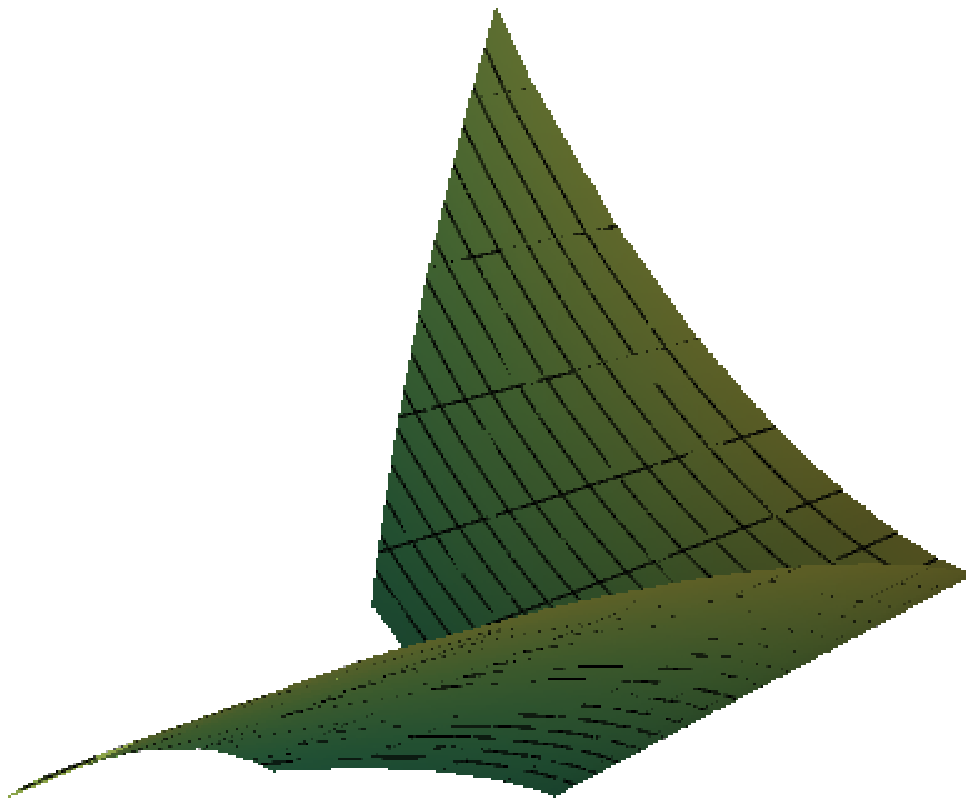}
 \hspace{2mm}
  \includegraphics[width=.2\linewidth]{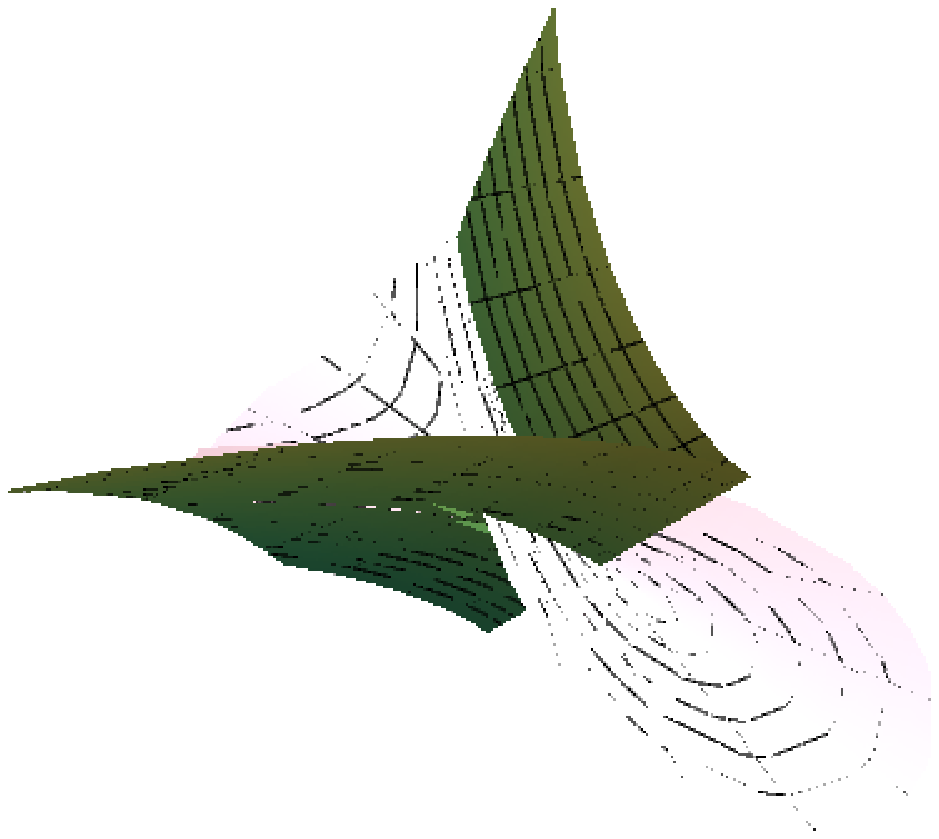}
\caption{Standard swallowtail (white), normal developable (green) 
and both of them}
\label{fig:odfsw11}
\end{figure}
\end{example}
\begin{example}
Let us set
\begin{equation}\label{eq:swex}
f(u, v) = 
\left(v + \dfrac{u^2}{2} - \dfrac{u^2 v}{2} - \dfrac{u^4}{8}, 
\dfrac{u^3}{3} + u v,  \dfrac{v^2}{2}\right).
\end{equation}
The osculating and normal developable surfaces
of $f$ are drawn in Figures
\ref{fig:odfsw21} and \ref{fig:odfsw22}.
\begin{figure}[!ht]
\centering
  \includegraphics[width=.2\linewidth]{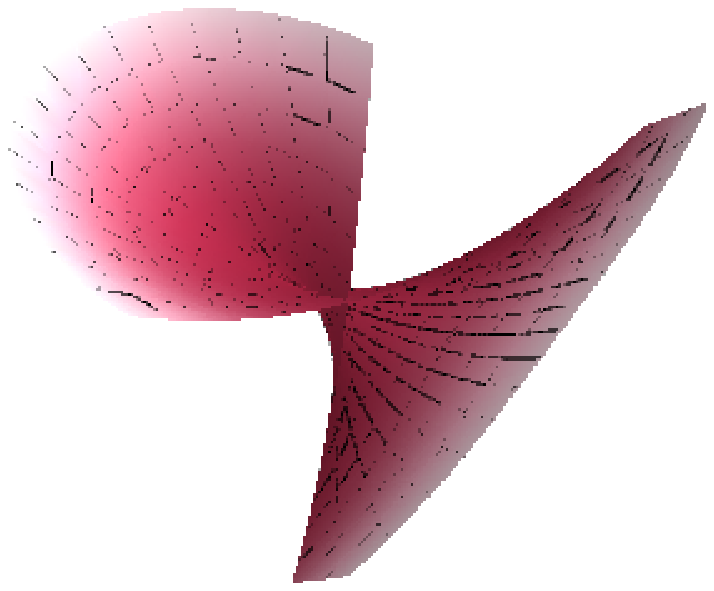}
 \hspace{2mm}
  \includegraphics[width=.2\linewidth]{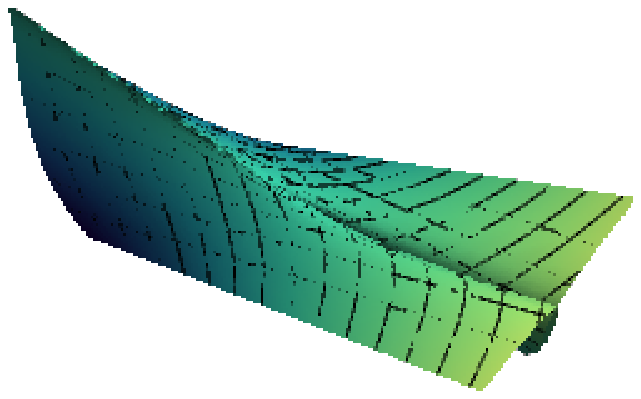}
 \hspace{2mm}
  \includegraphics[width=.2\linewidth]{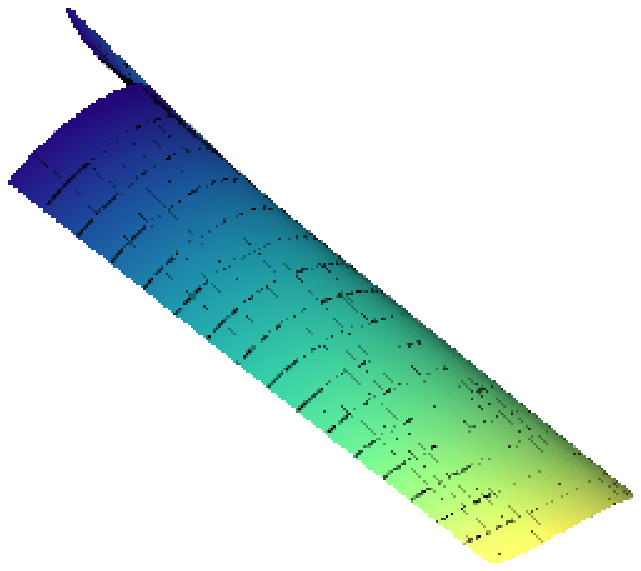}
\caption{Swallowtail given in \eqref{eq:swex} and its
osculating and normal developables}
\label{fig:odfsw21}
\end{figure}
\begin{figure}[!ht]
\centering
  \includegraphics[width=.2\linewidth]{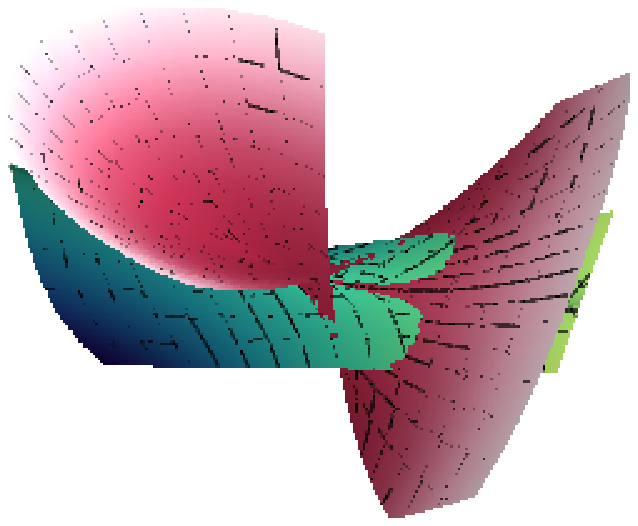}
 \hspace{2mm}
  \includegraphics[width=.2\linewidth]{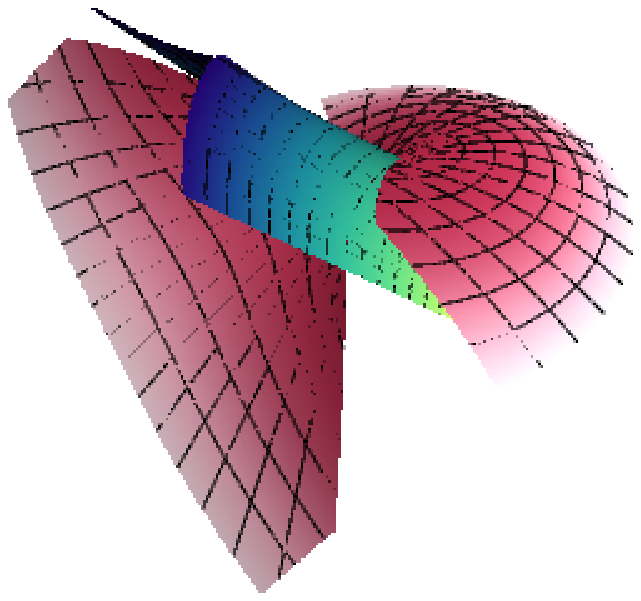}
\caption{Swallowtail given in \eqref{eq:swex} with its
osculating and normal developables}
\label{fig:odfsw22}
\end{figure}
\end{example}

\section{Special swallowtails}
In this section we consider the case when the singular values 
of $OD_f$ and $ND_f$ are special. In particular,
the empty set and a point.
Namely, we study the cases 
$OD_f$ and $ND_f$ are a cylinder or a cone.
Let $f:(\R^2,0)\to (\R^3,0)$ be a frontal and $0$ a singular point
of the second kind.
Let $\{\e,\bb,\bnu\}$ be the Darboux frame 
defined in Subsection \ref{sec:frame}.

We now define the notion of contour edges.
For a unit vector $\bm{k} \in S^2=\{\bx\in\R^3\,|\,|\bx|=1\}$, 
we say that $S(f)$
is the 
{\it  tangential contour edge of the 
orthogonal projection with the direction\/}
$\bm{k}$ if
\[
S(f)=\{ (u,0)\in (\R^2,0) \ |\ \langle \bm{\nu}(u), \bm{k} \rangle = 0\}.
\]
We also say that $S(f)$ is the 
{\it  normal contour edge
 of the orthogonal projection
with the direction\/}
$\bm{k}$ if
\[
S(f)=\{ (u,0)\in (\R^2,0) \ |\ \langle \bm{b}(u), \bm{k} \rangle = 0\}.
\]
Moreover, for a point $\bm{c}\in \R^3,$
we say that $S(f)$ is the 
{\it  tangential contour edge
 of the central projection\/}
(respectively, 
{\it  normal contour edge
 of the central projection\/})
{\it with the center\/} $\bm{c}$ if 
\[
\begin{array}{rcl}
S(f)&=&
\{(u,0)\in (\R^2,0)\ |\ \langle f(u,0) - \bm{c}, \bm{\nu}(u)\rangle = 0\ \}.\\
\big(\text{respectively, }
S(f)&=&
\{(u,0)\in (\R^2,0)\ |\ \langle f(u,0) - \bm{c}, \bm{b}(u)\rangle = 0\ \}.\big)
\end{array}
\]
For a regular surface, the notion of contour edges 
corresponds to the notion of 
contour generators \cite{book4}.
%It is also the singular set of 
%the central projection with the center $\bm{c}.$

\subsection{Osculating developable is a cylinder or a cone}
\begin{theorem} 
With the same notations as the previous sections, 
we have the following\/{\rm :}
\par\noindent
{\rm (A)} Suppose that\/ 
$\tilde\kappa_t^2+\tilde\kappa_\nu^2\not= 0.$ 
Then the following properties are equivalent\/{\rm :}
\begin{enumerate}
\item\label{itm:511} $OD_f$ is a cylinder,
\item\label{itm:512} $\tilde\delta_o \equiv 0$,
\item\label{itm:513} $\bm{\nu}$ is a part of a great circle in\/ $S^2.$
\item\label{itm:514} $S(f)$ is a tangential contour edge 
                    with respect to an orthogonal projection.
\item\label{itm:515} $\overline{D_o}$ is a constant vector.
%\item\label{itm:516} $\overline{\bs_o'}$ is  a constant vector.
\end{enumerate}
\par\noindent
{\rm (B)} Suppose that\/ 
$\tilde\kappa_g^2+\tilde\kappa_t^2\not= 0.$ 
Then the following properties are equivalent\/{\rm :}
\begin{enumerate}
\item\label{itm:521} $ND_f$ is a cylinder,
\item\label{itm:522} $\tilde\delta_n(u)
 \equiv 0$,
\item\label{itm:523} $\bb$ is a part of a great circle in\/ $S^2,$
\item\label{itm:524} $S(f)$ is a normal contour edge 
                      with respect to an orthogonal projection.		
\item\label{itm:525} $\overline{D_n}$ is a constant vector.
%\item\label{itm:526} $\overline{\bs_n'}$ is a constant vector.
\end{enumerate}
\end{theorem}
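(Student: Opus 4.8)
The plan is to prove both parts (A) and (B) by establishing a cycle of implications, treating them symmetrically since $\overline{D_n}$ plays the same role for $ND_f$ as $\overline{D_o}$ does for $OD_f$. I will focus on (A); part (B) follows by the identical argument with $(\tilde\kappa_t,-\tilde\kappa_\nu,\bb)$ replaced by $(\tilde\kappa_t,\tilde\kappa_g,\bnu)$ and the formulas \eqref{eq:dnp}, \eqref{eq:deltab} in place of \eqref{eq:dop}, \eqref{eq:deltao}. The central computational fact is already recorded in \eqref{eq:dop}: under the hypothesis $\tilde\kappa_t^2+\tilde\kappa_\nu^2\neq 0$, the vector field $\overline{D_o}{}'$ is a scalar multiple of $\tilde\kappa_\nu\be+\tilde\kappa_t\bb$, with the scalar being $\tilde\delta_o/(\tilde\kappa_t^2+\tilde\kappa_\nu^2)^{3/2}$. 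Since $\tilde\kappa_\nu\be+\tilde\kappa_t\bb$ is a nonzero vector (its length is $\sqrt{\tilde\kappa_\nu^2+\tilde\kappa_t^2}\neq 0$), we get immediately that $\overline{D_o}{}'\equiv 0$ if and only if $\tilde\delta_o\equiv 0$; this gives $\eqref{itm:512}\Leftrightarrow\eqref{itm:515}$.

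Next I would close the loop among \eqref{itm:511}, \eqref{itm:512}, \eqref{itm:515}. The implication $\eqref{itm:515}\Rightarrow\eqref{itm:511}$ is the definition of a cylinder (a ruled surface with constant ruling direction). For $\eqref{itm:511}\Rightarrow\eqref{itm:512}$ I would argue that a developable ruled surface $OD_f(u,t)=\hat\bgamma(u)+t\overline{D_o}(u)$ is a cylinder precisely when $\overline{D_o}{}'\equiv 0$: indeed for a developable (non-cylindrical on any subinterval) surface the striction curve together with the moving ruling direction is intrinsic, and if the image were a cylinder while $\overline{D_o}{}'\not\equiv 0$ somewhere, the rulings would sweep a genuinely curved generator contradicting constancy of the asymptotic direction; alternatively one simply cites \cite{gray,Pot} that a developable surface is a cylinder iff its director curve is constant. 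So $\eqref{itm:511}\Leftrightarrow\eqref{itm:512}\Leftrightarrow\eqref{itm:515}$.

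For $\eqref{itm:513}\Leftrightarrow\eqref{itm:514}$ and their link to the previous three: by the Frenet-type equations \eqref{eq:frenet}, the spherical curve $\bnu(u)$ lies on a great circle iff it is contained in a fixed plane through the origin, i.e.\ iff there is a constant unit vector $\bm{k}$ with $\inner{\bnu(u)}{\bm{k}}\equiv 0$; but this last condition is exactly the defining property of $S(f)$ being a tangential contour edge of the orthogonal projection in the direction $\bm{k}$, giving $\eqref{itm:513}\Leftrightarrow\eqref{itm:514}$ at once. To connect with \eqref{itm:515}, I would differentiate $\inner{\bnu(u)}{\bm{k}}\equiv 0$ to get $\inner{\bnu'}{\bm{k}}\equiv 0$, and using the third equation of \eqref{eq:frenet}, namely $\bnu'=-\tilde\kappa_\nu\be-\tilde\kappa_t\bb$, this says $\bm{k}$ is orthogonal to $\bnu$ and to $\tilde\kappa_\nu\be+\tilde\kappa_t\bb$; under $\tilde\kappa_t^2+\tilde\kappa_\nu^2\neq 0$ the span of these two vectors is a plane, so $\bm{k}$ is (up to sign) the normalized cross product, and one checks this cross product is exactly $\pm\overline{D_o}$ up to the sign convention in its definition $\overline{D_o}=(\tilde\kappa_t\be-\tilde\kappa_\nu\bb)/\sqrt{\tilde\kappa_t^2+\tilde\kappa_\nu^2}$. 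Hence $\overline{D_o}\equiv\pm\bm{k}$ is constant, i.e.\ \eqref{itm:515} holds; conversely if $\overline{D_o}$ is the constant $\bm{k}$ then $\inner{\bnu}{\overline{D_o}}=\inner{\bnu}{(\tilde\kappa_t\be-\tilde\kappa_\nu\bb)}/\sqrt{\tilde\kappa_t^2+\tilde\kappa_\nu^2}=0$ identically, giving \eqref{itm:514}. This closes the full equivalence for (A).

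\textbf{Main obstacle.} The one step requiring care is $\eqref{itm:511}\Rightarrow\eqref{itm:512}$: "$OD_f$ is a cylinder" refers to the image surface, so a priori one only knows the image coincides with that of some cylinder, not that the given ruling parametrization has constant direction. I would handle this by invoking that $OD_f$ is developable with $\overline{D_o}{}'$ always proportional to a single nonvanishing vector $\tilde\kappa_\nu\be+\tilde\kappa_t\bb$ (from \eqref{eq:dop}), so $\overline{D_o}$ traces an arc of a great circle in $S^2$; if this arc is nonconstant then the tangent planes of $OD_f$ (which are spanned by $\hat\bgamma'$ and $\overline{D_o}$, with normal $\bnu$) vary, whereas a cylinder over a planar curve has tangent planes all containing the fixed axis direction $\overline{D_o}$ — forcing $\overline{D_o}$ to be constant after all. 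Everything else is bookkeeping with \eqref{eq:frenet}, \eqref{eq:dop}, \eqref{eq:deltao} and their primed ($n$-subscript) analogues.
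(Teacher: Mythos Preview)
Your argument is correct and follows essentially the same route as the paper's: both rely on \eqref{eq:dop} to obtain the equivalence of \eqref{itm:511}, \eqref{itm:512}, \eqref{itm:515}, and both close the loop to \eqref{itm:513} and \eqref{itm:514} through the orthogonality $\inner{\bnu}{\overline{D_o}}=0$ and the observation that a spherical curve lies on a great circle iff it stays in a fixed hyperplane through the origin. The only substantive difference is in how \eqref{itm:513} is linked to \eqref{itm:512}: the paper computes $\bnu''$ directly from \eqref{eq:frenet} and shows $\det(\bnu,\bnu',\bnu'')=\tilde\delta_o$, so that the spherical geodesic curvature of $\bnu$ equals $\tilde\delta_o(\tilde\kappa_t^2+\tilde\kappa_\nu^2)^{-3/2}$, giving \eqref{itm:512}$\Leftrightarrow$\eqref{itm:513} in one stroke; you instead pass through \eqref{itm:514} and identify the fixed $\bm{k}$ with $\pm\overline{D_o}$. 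Your detour is slightly longer but entirely equivalent; the paper's computation has the small bonus of giving an intrinsic geometric meaning to $\tilde\delta_o$ itself.
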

\begin{proof}
We show the assertions (A).
By \eqref{eq:dop}, we see that the equivalency of 
\eqref{itm:511}, \eqref{itm:512} and \eqref{itm:525}.
The condition $\tilde\kappa_t^2+\tilde\kappa_{\nu}^2\not= 0$ means that 
$\bm{\nu}$ is a non-singular spherical curve.
Moreover, since
$
\bm{\nu}''=(\tilde\kappa_g \tilde\kappa_t - \tilde\kappa_\nu')\be
+(-\tilde\kappa_\nu \tilde\kappa_g -  \tilde\kappa_t')\bb
$ and by \eqref{eq:deltao},
we see that 
$\det(\bnu,\bnu',\bnu'')=\tilde\delta_o$.
This implies that
the geodesic curvature of $\bm{\nu}$
is
$\tilde\delta_o(\tilde\kappa_t^2+\tilde\kappa_{\nu}^2)^{-3/2}$,
and it shows that the equivalency of \eqref{itm:512} 
and \eqref{itm:513}.
We assume \eqref{itm:515}. Then $\overline{D_o}(u)$ is a 
constant vector $\overline{D_o}$.
Thus $\inner{\bnu(u)}{\overline{D_o}}=0$ for any $u$.
This implies that $S(f)$ is a tangential contour edge with respect
to $\overline{D_o}$, and it implies \eqref{itm:514}.
Conversely, we assume \eqref{itm:514}. 
Then there exists a vector $\bm{k}$ such that
$\inner{\bnu(u)}{\bm{k}}=0$ holds for any $u$.
This implies that $\bnu(u)$ belongs to the normal plane of $\bm{k}$
passing through the origin, and it implies \eqref{itm:513}.
Thus the assertion of (A) holds.
One can show the assertion of (B) by the same method to 
the proof of (A) using \eqref{eq:deltab} and \eqref{eq:dnp} 
instead of \eqref{eq:deltao} and \eqref{eq:dop}.
\end{proof}
We also have the following theorem.
\begin{theorem} 
With the same notations as above, we have the following\/{\rm :}
\par\noindent
{\rm (A)} Suppose that\/ 
$\tilde\kappa_t^2+\tilde\kappa_{\nu}^2\not= 0$
and
$\tilde\delta_o\not= 0$
for any $u\in I$.
Then the following properties are equivalent\/{\rm :}
\begin{enumerate}
\item\label{itm:odcone1} $OD_f$ is a cone,
\item\label{itm:odcone2} $\tilde\sigma_o\equiv0$,
%\item\label{itm:odcone3} $\bm{\nu}$ is ? 
\item\label{itm:odcone4} $S(f)$ is a tangential contour edge 
      with respect to a central projection.
\item\label{itm:odcone5} $\bs_o$ is  a constant vector.
\end{enumerate}
\par\noindent
{\rm (B)} Suppose that\/ 
$\tilde\kappa_t^2+\tilde\kappa_g^2\not= 0$
and
$\tilde\delta_n\not= 0$
for any $u\in I$. 
Then the following properties are equivalent\/{\rm :}
\begin{enumerate}
\item\label{itm:ndcone1} $ND_f$ is a cone,
\item\label{itm:ndcone2} $\tilde\sigma_n\equiv0$,
%\item\label{itm:ndcone3} $\bb$ is ? 
\item\label{itm:ndcone4} $S(f)$ is a normal contour edge
with respect to a central projection.
\item\label{itm:ndcone5} $\bs_n$ is  a constant vector.
\end{enumerate}
\end{theorem}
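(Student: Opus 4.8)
The plan is to prove the two equivalences (A) and (B) in parallel, exactly mirroring the structure of the proof of the cylinder theorem that precedes it, but now with the striction curve $\bs_o$ (resp. $\bs_n$) playing the role that $\overline{D_o}$ (resp. $\overline{D_n}$) played there. First I would establish the chain $\eqref{itm:odcone2}\Leftrightarrow\eqref{itm:odcone5}$: by \eqref{eq:sodp} we have $\bs_o'=-(\tilde\sigma_o/\tilde\delta_o^2)(\tilde\kappa_t\be-\tilde\kappa_\nu\bb)$, and since $\tilde\delta_o\ne0$ and $(\tilde\kappa_t,\tilde\kappa_\nu)\ne(0,0)$ on all of $I$, the vector $\tilde\kappa_t\be-\tilde\kappa_\nu\bb$ never vanishes; hence $\bs_o'\equiv0$ if and only if $\tilde\sigma_o\equiv0$. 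That gives the equivalence of \eqref{itm:odcone2} and \eqref{itm:odcone5} immediately.

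Next I would show $\eqref{itm:odcone1}\Leftrightarrow\eqref{itm:odcone5}$. A developable ruled surface is a cone precisely when its striction curve degenerates to a point (this is the standard classification of developables into cylinders, cones, and tangent developables — cf. \cite{gray,Pot}), and since we are assuming $OD_f$ is non-cylindrical on $I$ (because $\tilde\delta_o\ne0$, using the already-proved cylinder theorem), the only remaining way for it to be a cone is that $\bs_o$ is constant. Writing $OD_f(u,t)=\bs_o(u)+(t-t_o(u))\overline{D_o}(u)$ makes this transparent: if $\bs_o\equiv\bm c$ then every ruling passes through $\bm c$, so $OD_f$ is a cone with vertex $\bm c$; conversely if $OD_f$ is a cone with vertex $\bm c$, then $\bm c$ lies on every ruling, and since the rulings are not all parallel, the point on each ruling closest to the consecutive ruling — i.e.\ the striction point — must be $\bm c$ itself, forcing $\bs_o\equiv\bm c$.

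For $\eqref{itm:odcone4}\Leftrightarrow\eqref{itm:odcone5}$ I would argue as in the cylinder case: if $\bs_o\equiv\bm c$, then from \eqref{eq:stro} we have $\hat\bgamma(u)-\bm c = \dfrac{\inner{\hat\bgamma'}{\overline{D_o}'}}{\inner{\overline{D_o}'}{\overline{D_o}'}}\overline{D_o}(u)$, which is parallel to $\overline{D_o}(u)$; since $\overline{D_o}(u)\perp\bnu(u)$ by construction, this gives $\inner{f(u,0)-\bm c}{\bnu(u)}=0$ for all $u$, i.e.\ $S(f)$ is the tangential contour edge of the central projection with center $\bm c$. Conversely, if $\inner{f(u,0)-\bm c}{\bnu(u)}=0$ for all $u$, differentiating and using the Frenet equations \eqref{eq:frenet} together with $f_u(u,0)=-\ep(u)\be(u)$ on the $u$-axis (from \eqref{eq:dfeta} and Lemma \ref{lem:enu}) shows that $\hat\bgamma-\bm c$ lies in $\spann{\be,\bb}$ and in fact is a multiple of $\overline{D_o}$; comparing with \eqref{eq:stro} then yields $\bs_o\equiv\bm c$. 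The part (B) is handled identically, using \eqref{eq:sndp}, \eqref{eq:strn}, and the fact that $\overline{D_n}\perp\bb$, with the central projection now taken with respect to $\bb$.

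The main obstacle will be pinning down the converse direction in $\eqref{itm:odcone4}\Rightarrow\eqref{itm:odcone5}$ cleanly: from $\inner{f(u,0)-\bm c}{\bnu(u)}\equiv0$ one must deduce that $f(u,0)-\bm c$ is proportional to $\overline{D_o}(u)$ and not merely that it lies in the plane $\bnu(u)^\perp=\spann{\be(u),\bb(u)}$. This requires differentiating the contour condition once more and using $\tilde\delta_o\ne0$ to rule out the $\bb$-component; the computation is short but one has to be careful with the $\sgn(u)$ and the non-differentiability of the arclength, which is why the hypothesis $\tilde\delta_o\ne0$ (equivalently $\overline{D_o}$ being genuinely non-constant) is essential — it is exactly what guarantees the striction curve is well-defined and that the central-projection condition forces the center onto the striction point rather than merely onto the tangent plane.
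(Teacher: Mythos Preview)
Your proposal is correct. The equivalences $(1)\Leftrightarrow(2)\Leftrightarrow(5)$ and the implication $(5)\Rightarrow(4)$ are handled exactly as in the paper. Your sign in $f_u(u,0)=-\ep(u)\be(u)$ is off (it is $+\ep(u)\be(u)$, since $f_u=-\ep f_v$ and $f_v=-\be$ on the $u$-axis), but this is harmless because only $\hat\bgamma'\perp\bnu$ is used.

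The one genuine difference is in $(4)\Rightarrow(5)$. You differentiate $\inner{f(u,0)-\bm c}{\bnu}=0$ once; since $\hat\bgamma'=\ep\be\perp\bnu$ this yields $\inner{f(u,0)-\bm c}{\bnu'}=0$, and as $\bnu'=-\tilde\kappa_\nu\be-\tilde\kappa_t\bb$ this forces $f(u,0)-\bm c$ to be parallel to $\tilde\kappa_t\be-\tilde\kappa_\nu\bb$, i.e.\ to $\overline{D_o}$. So $\bm c$ lies on every ruling, and since the striction curve is independent of the base curve (or equivalently, since $\bs_o'\parallel\overline{D_o}$ by \eqref{eq:sodp} while $\overline{D_o}'\ne0$), one gets $\bs_o\equiv\bm c$. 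Note that a single differentiation already suffices here; you do not actually need to ``differentiate once more'' as your last paragraph suggests, and the role of $\tilde\delta_o\ne0$ is only in the last step (ensuring $\overline{D_o}'\ne0$), not in ruling out a $\bb$-component.

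The paper argues $(4)\Rightarrow(5)$ differently: it works with $\bs_o-\bm c$ rather than $f(u,0)-\bm c$, observes that $\inner{\bs_o-\bm c}{\bnu}=0$ (since $\bs_o-f(u,0)\parallel\overline{D_o}\perp\bnu$), then differentiates \emph{twice}, using $\inner{\bs_o'}{\bnu}=\inner{\bs_o'}{\bnu'}=0$ from \eqref{eq:sodp}, to obtain $\bs_o-\bm c\perp\bnu,\bnu',\bnu''$. Since $\det(\bnu,\bnu',\bnu'')=\tilde\delta_o\ne0$ (computed in the proof of the preceding cylinder theorem), these three vectors span $\R^3$ and $\bs_o=\bm c$ follows. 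Your route is a bit more direct and geometric; the paper's route is more uniform in that it reuses the identity $\det(\bnu,\bnu',\bnu'')=\tilde\delta_o$ and never needs to invoke general striction-curve facts.
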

\begin{proof}
By \eqref{eq:sodp}, we see that the equivalency of 
\eqref{itm:ndcone1}, \eqref{itm:ndcone2} and \eqref{itm:ndcone5}.
We assume \eqref{itm:ndcone2}.
Then $\bs_o(u)$ is a constant vector for any $u$.
We set $\bc=\bs_o(u)$.
Then by \eqref{eq:stro}, $f(u,0)-\bc$ is parallel to
$\overline{D_o}(u)$.
Thus 
$\inner{f(u,0)-\bc}{\bnu(u)}
=
\inner{\overline{D_o}(u)}{\bnu(u)}=0$ holds for any $u$.
This implies \eqref{itm:ndcone4}.
Conversely, we assume \eqref{itm:ndcone4}. 
Then there exists a vector $\bc$ such that
$\inner{f(u,0)-\bc}{\bnu(u)}\equiv0$.
By \eqref{eq:stro}, $\bs_o(u)-f(u,0)$
is parallel to $\overline{D_o}(u)$,
$\inner{\bs_o(u)-\bc}{\bnu(u)}\equiv0$.
Differentiating this equation by $u$,
and noticing $\inner{\bs_o'(u)}{\bnu(u)}\equiv0$
by \eqref{eq:sodp},
we have $\inner{\bs_o(u)}{\bnu'(u)}\equiv0$.
On the other hand, by \eqref{eq:sodp} and \eqref{eq:frenet},
we see that $\inner{\bs_o'(u)}{\bnu'(u)}\equiv0$.
Thus
differentiating $\inner{\bs_o(u)}{\bnu'(u)}\equiv0$ by $u$,
we have $\inner{\bs_o(u)}{\bnu''(u)}\equiv0$.
On the other hand, by \eqref{eq:frenet}, the three vectors
$\bnu(u),\bnu'(u),\bnu''(u)$ are linearly independent if
and only if $\tilde\delta_o(u)\ne0$.
Hence 
$$
\inner{\bs_o(u)-\bc}{\bnu(u)}\equiv
\inner{\bs_o(u)-\bc}{\bnu'(u)}\equiv
\inner{\bs_o(u)-\bc}{\bnu''(u)}\equiv0
$$
implies $\bs_o(u)-\bc\equiv0$, and this 
implies \eqref{itm:ndcone1}.
Thus the assertion of (A) holds.
One can show the assertion of (B) by the same method to 
the proof of (A) using \eqref{eq:sndp}
instead of \eqref{eq:sodp}.
\end{proof}

Let us consider 
a cylinder $c_y:(\R^2,0)\to(\R^3,0)$ and
a cone $c_o:(\R^2,0)\to(\R^3,0)$, and consider
and a Whitney cusp $f_w:(\R^2,0)\to(\R^2,0)$.
Here, a Whitney cusp is a map germ
which is $\A$-equivalent to $(u,v)\mapsto(u,v^3+uv)$.
Then each $c_y\circ f_w:(\R^2,0)\to(\R^3,0)$ and
$c_o\circ f_w:(\R^2,0)\to(\R^3,0)$ is a frontal
and each $0$ is a singularity of the second kind.
Thus $OD_{c_y\circ f_w}$ is a cylinder, and
$OD_{c_o\circ f_w}$ is a cone.
These examples are a kind of trivial examples.
Here we give non-trivial examples.
\begin{example}\label{ex:swodfcyl}
Let us set
$$f(u,v)=
\left(-\dfrac{u^2}{2}+v,
\dfrac{u^3}{3}-u v,
\dfrac{u^4}{8}+\dfrac{1}{2} \left(\dfrac{u^2}{2}-v\right)^2-\dfrac{u^2 v}{2}
\right).
$$
Then we see that $S(f)=\{v=0\}$ and $\tilde\delta_o(u)=0$.
Thus $OD_f$ is a cylinder.
The figure of this example is given in Figure \ref{fig:swodfcyl}.
\begin{figure}[!ht]
\centering
  \includegraphics[width=.2\linewidth]{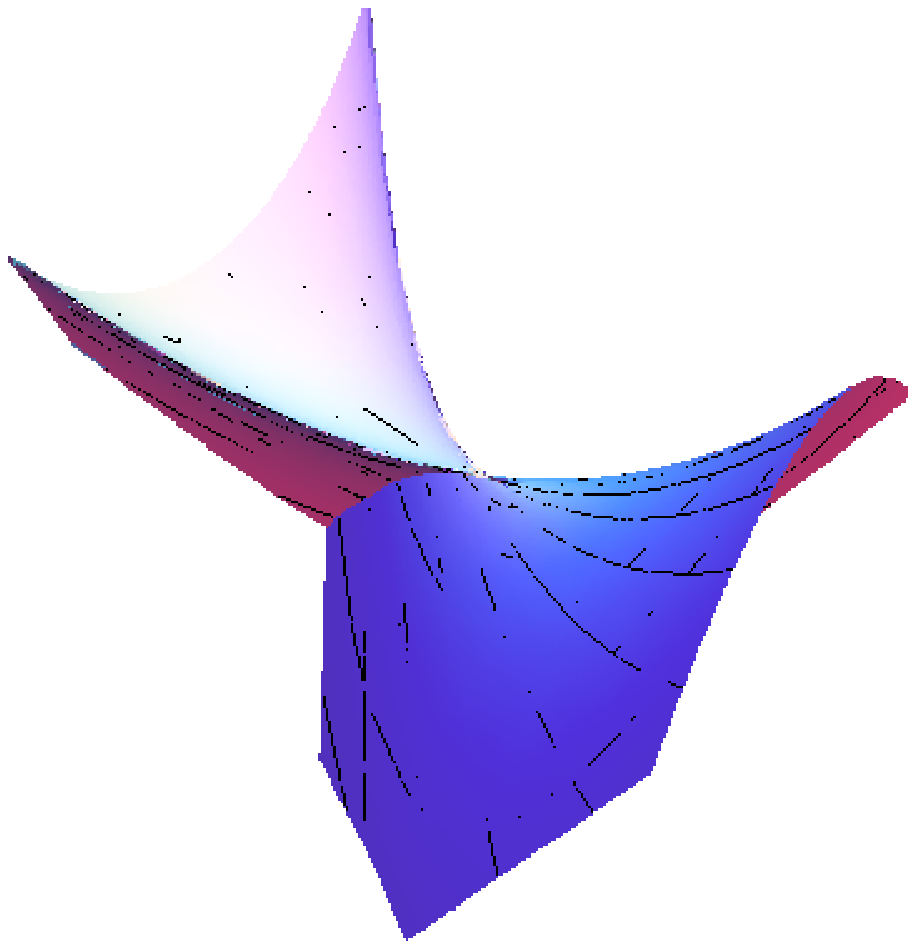}
 \hspace{2mm}
  \includegraphics[width=.2\linewidth]{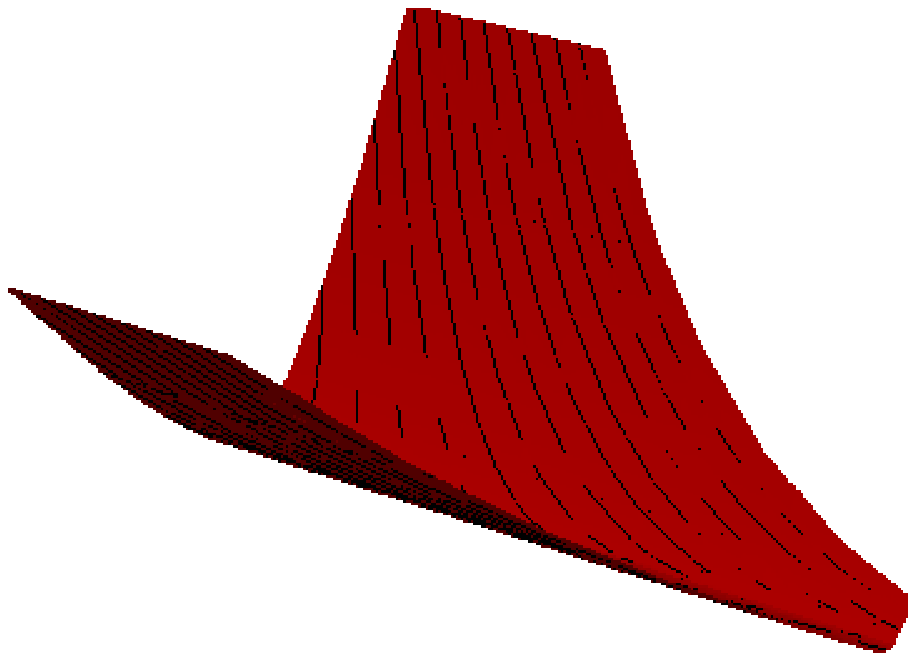}
 \hspace{2mm}
  \includegraphics[width=.2\linewidth]{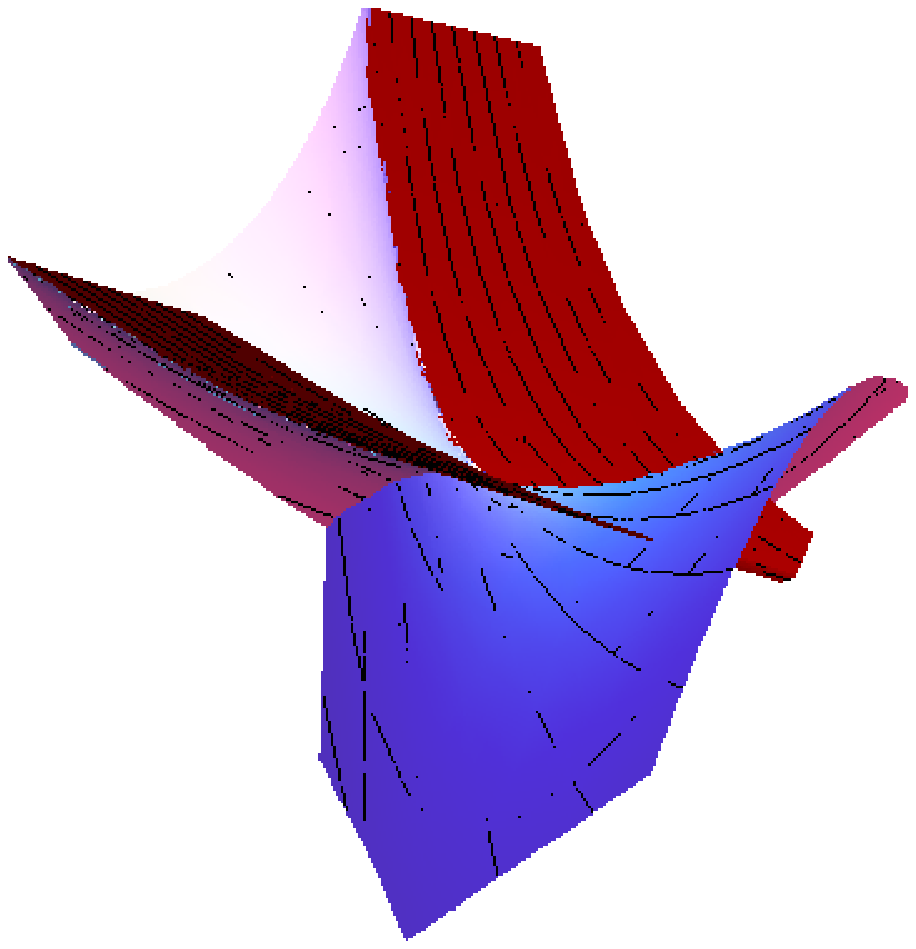}
\caption{Swallowtail in Example \ref{ex:swodfcyl}, 
with its osculating developable and both surfaces}
\label{fig:swodfcyl}
\end{figure}
\end{example}

Let us consider 
a plane $p:(\R^2,0)\to(\R^3,0)$ and
a sphere $s:(\R^2,0)\to(\R^3,0)$, and consider
and a Whitney cusp $f_w:(\R^2,0)\to(\R^2,0)$.
Then each $p\circ f_w:(\R^2,0)\to(\R^3,0)$ and
$s\circ f_w:(\R^2,0)\to(\R^3,0)$ is a frontal
and each $0$ is a singularity of the second kind.
Thus $ND_{c_y\circ f_w}$ is a cylinder, and
$ND_{c_o\circ f_w}$ is a cone. 
We say that $f:(\R^2,0)\to(\R^3,0)$ is a {\it Whitney frontal} 
if it is $\mathcal{A}$-equivalent to $(u,v)\mapsto(u,v^2,0)$ 
or $(u,v)\mapsto(u,v^3+uv,0)$. 
Then $p\circ f_w$ and $s\circ f_w$ are Whitney frontals. 
These examples are a kind of trivial examples.
Here we give non-trivial examples
\begin{example}\label{ex:swndfcyl}
Let us set
$$f(u,v)=
\left(
-\dfrac{u^2}{2}+v,
\dfrac{u^3}{3}-u v,
\dfrac{u^4}{8}-\dfrac{u^2 v}{2}\right).
$$
Then we see that $S(f)=\{v=0\}$ and $\tilde\delta_n(u)=0$.
Thus $ND_f$ is a cylinder.
The figure of this example is given in Figure \ref{fig:swndfcyl}.
\begin{figure}[!ht]
\centering
  \includegraphics[width=.2\linewidth]{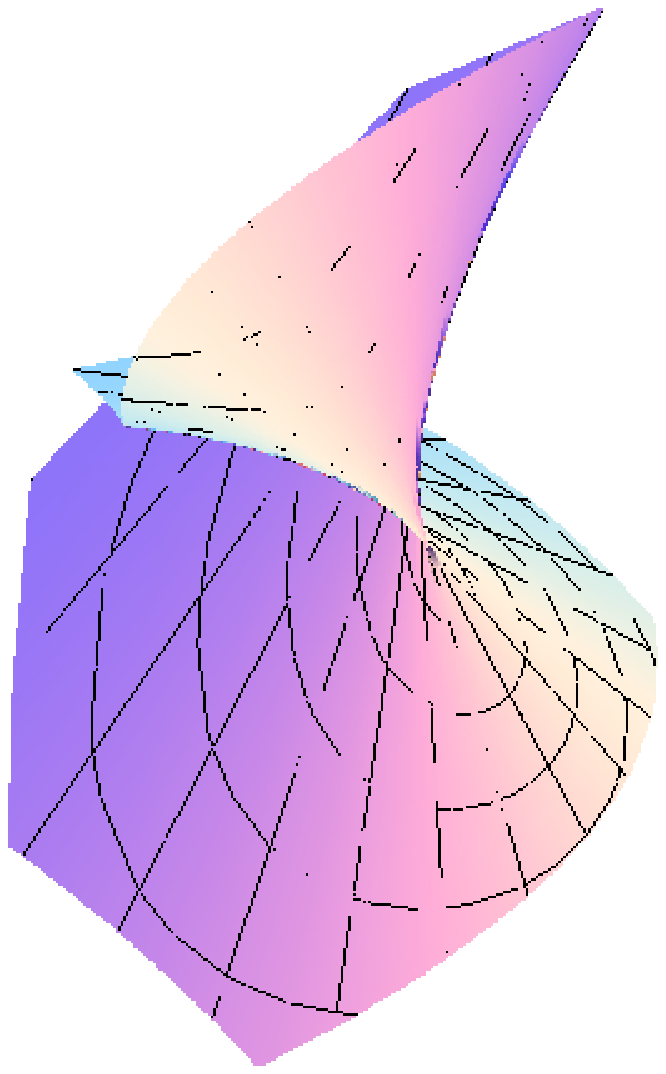}
 \hspace{2mm}
  \includegraphics[width=.2\linewidth]{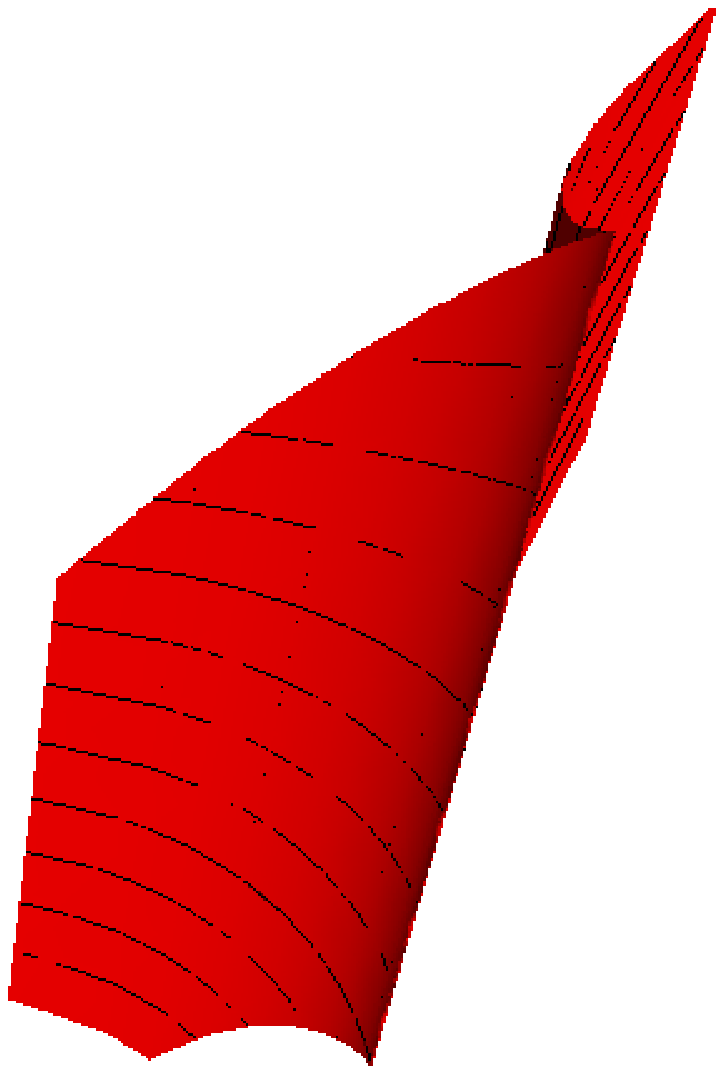}
 \hspace{2mm}
  \includegraphics[width=.2\linewidth]{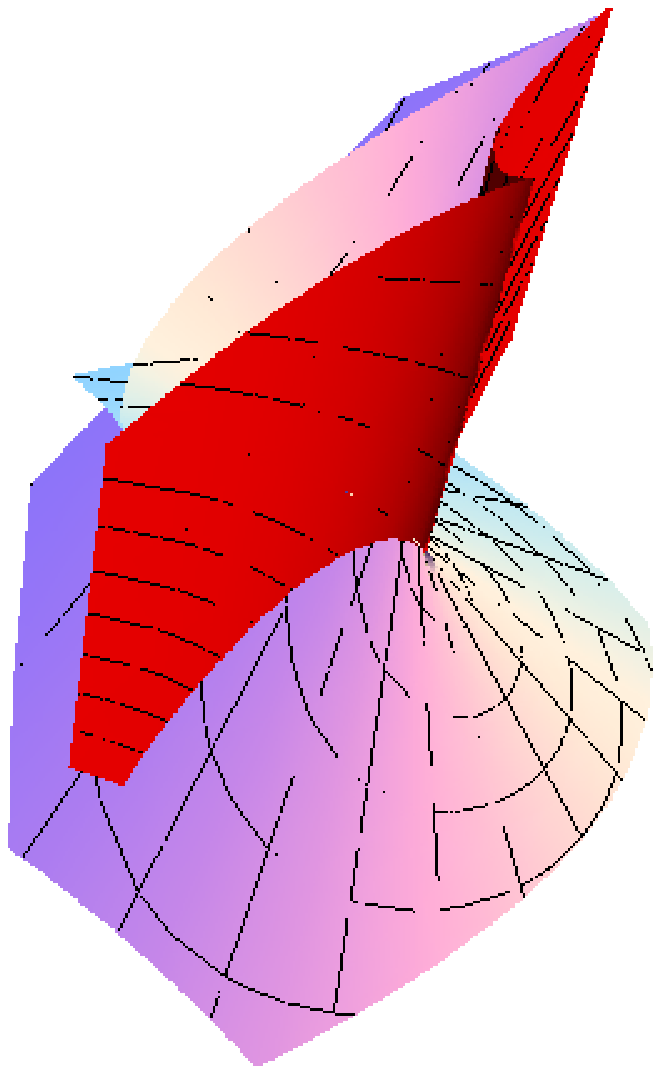}
\caption{Swallowtail in Example \ref{ex:swndfcyl}, 
with its normal developable and both surfaces}
\label{fig:swndfcyl}
\end{figure}
\end{example}
We also give another example of a singular point
whose normal developable
is a cylinder.
It is not a swallowtail but a singular point of the second kind.
\begin{example}\label{ex:ndfcyl}
Let us set
$$
f(u,v)=
(u^2-v,-u^3+3 u (u^2-v),(2 u^2-v)^3 v^3).
$$
Then we see that $f$ is a frontal and 
$0$ is a singular point of the second kind but is not a swallowtail.
We also see that $S(f)=\{v=0\}$ and $\tilde\delta_n(u)=0$.
Thus $ND_f$ is a cylinder.
The figure of this example is given in Figure \ref{fig:ndfcyl}. 
In this case, the image is locally homeomorphic to the swallowtail. 
We say that $f:(\R^2,0)\to(\R^3,0)$ is a {\it quasi-swallowtail} 
if it is a frontal, $0$ is a singular point of the second kind and 
the image is homeomorphic to the swallowtail. 
Then this example is the quasi-swallowtail.
\begin{figure}[!ht]
\centering
  \includegraphics[width=.3\linewidth]{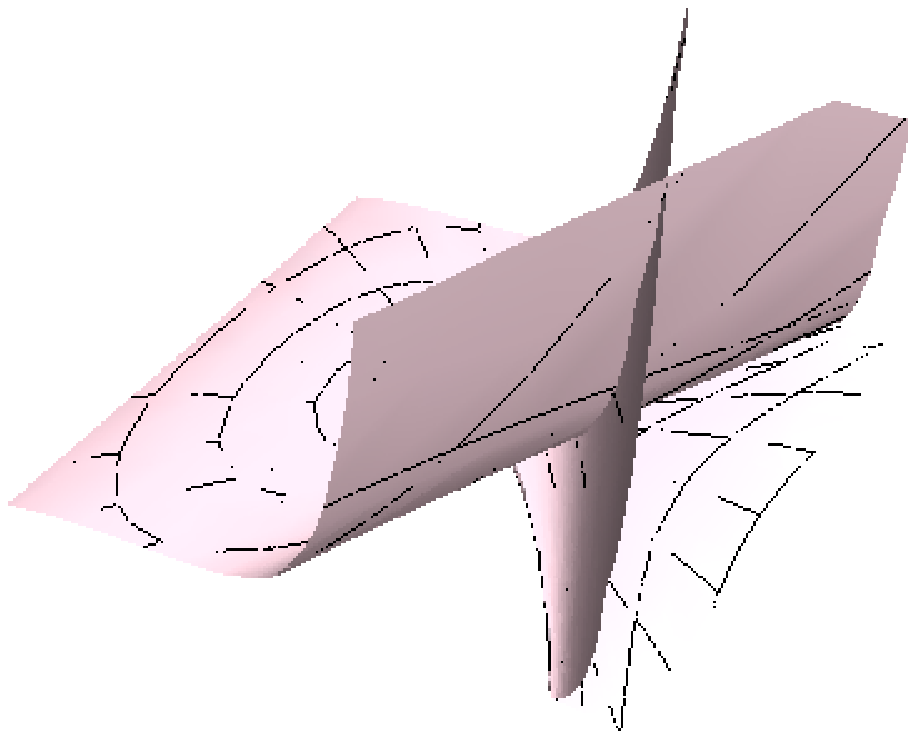}
 \hspace{2mm}
  \includegraphics[width=.1\linewidth]{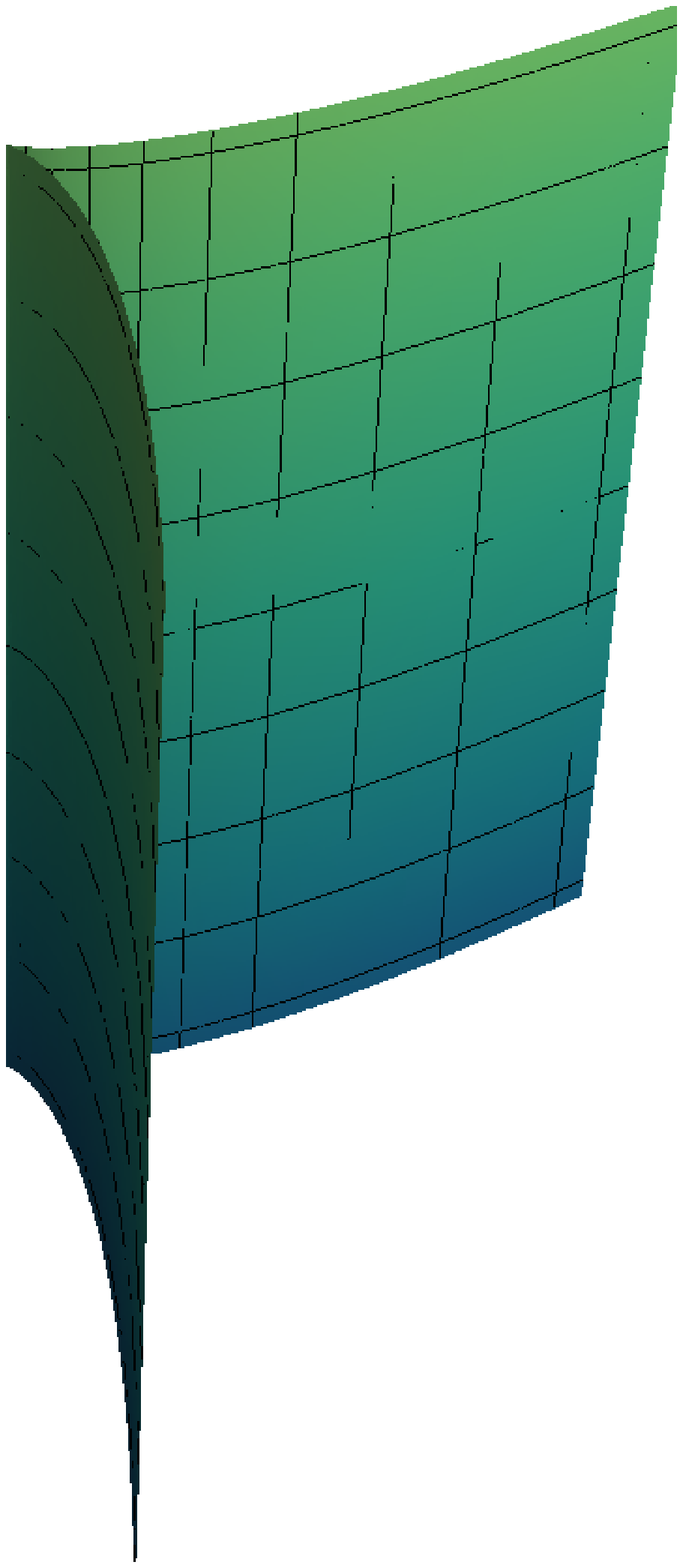}
 \hspace{2mm}
  \includegraphics[width=.3\linewidth]{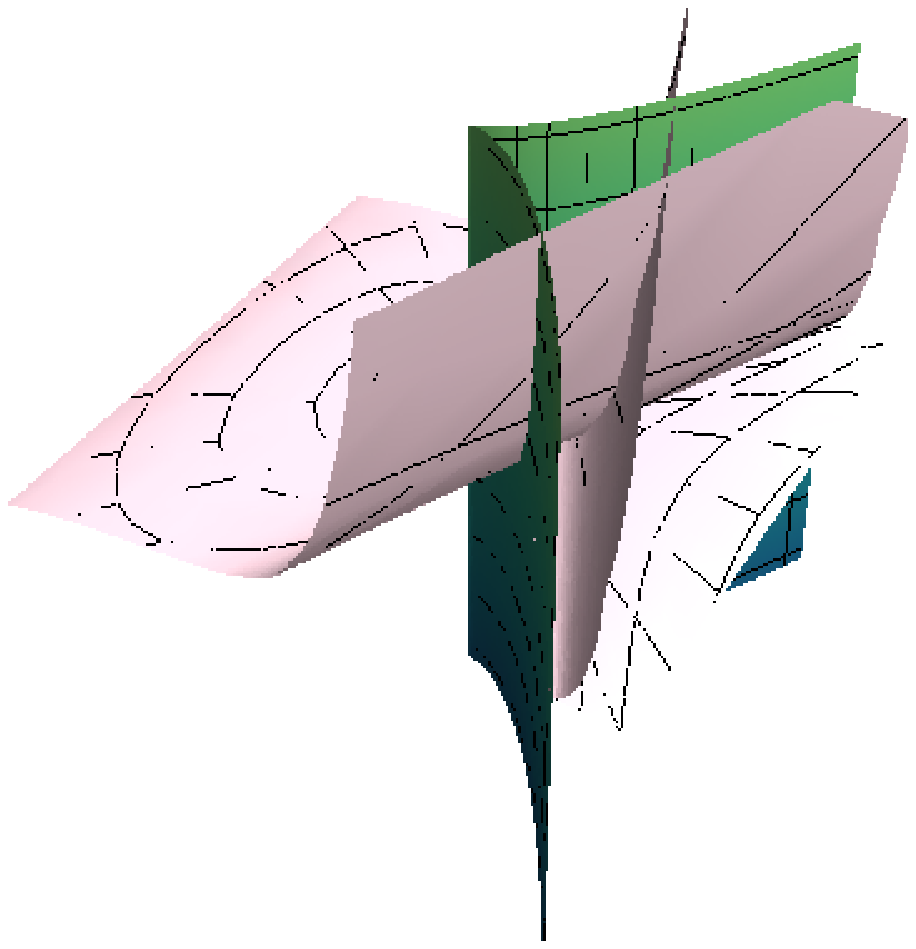}
\caption{Surface in Example \ref{ex:ndfcyl}, 
with its normal developable and both surfaces}
\label{fig:ndfcyl}
\end{figure}
\end{example}

%\section{thebibliography}

\begin{flushright}
\begin{tabular}{ll}
\begin{tabular}{l}
(Izumiya)\\
Department of Mathematics, \\
Hokkaido University,\\
Sapporo 060-0810, Japan\\
{\tt izumiyaO\!\!\!amath.sci.hokudai.ac.jp}\\
\phantom{a}\\
\phantom{a}
\end{tabular}
&
\begin{tabular}{l}
Department of Mathematics, \\
Kobe University,
Rokko 1-1, Nada, \\
Kobe 657-8501, Japan\\
(Saji)\\
{\tt sajiO\!\!\!amath.kobe-u.ac.jp}\\
(Teramoto)\\
{\tt teramotoO\!\!\!amath.kobe-u.ac.jp}
\end{tabular}
\end{tabular}
\end{flushright}

\end{document}